\documentclass[11pt]{amsart}

\UseRawInputEncoding  
\usepackage{amsmath,amssymb,amsthm}
\usepackage{amsmath,amssymb,amsthm,amscd}
\usepackage[frame,cmtip,arrow,matrix,line,graph,curve]{xy}
\usepackage{graphpap, color}
\usepackage[mathscr]{eucal}
\usepackage{color}

\def\dfrac{\displaystyle\frac}
\def\dsum{\displaystyle\sum}

\newtheorem{prop}{Proposition}
\newtheorem{theo}[prop]{Theorem}
\newtheorem{lemm}[prop]{Lemma}

\newtheorem{rema}[prop]{Remark}

\newtheorem{defi}[prop]{Definition}

\renewcommand{\leq}{\leqslant}
\renewcommand{\geq}{\geqslant}

 \textwidth 5.5 true in
\oddsidemargin 0.35 true in

\evensidemargin 0.35 true in

\setcounter{section}{0}

\pagestyle{plain} \footskip=50pt

\numberwithin{equation}{section}

\title{Interior Hessian estimates for sum Hessian quotient equation}

\begin{document}

\author{Changyu Ren}
\address{School of Mathematical Science\\
Jilin University\\ Changchun\\ China}
\email{rency@jlu.edu.cn}
\author{Ziyi Wang}
\address{School of Mathematical Science\\ Jilin University\\ Changchun\\ China}
\email{ziyiw23@mails.jlu.edu.cn}
\thanks{Research of the first author is supported by NSFC Grant No. 12571216, 11871243.}
\begin{abstract}
This paper is devoted to the interior $C^2$ estimates for a
class of sum Hessian quotient equations. For $0\leq l<k<n$, we establish the interior estimates
and the Pogorelov type estimates. In the case $k=n$, we obtain a
weaker Pogorelov type estimate for $0\leq l<n-1$.
\end{abstract}
\maketitle

\section{introduction}
\par
In this paper, we mainly study the interior $C^2$ estimates for the
following sum Hessian quotient equation
\begin{equation}\label{e1.1}
\frac{\sigma_k(\eta)+\alpha\sigma_{k-1}(\eta)}{\sigma_l(\eta)+\alpha\sigma_{l-1}(\eta)}=f(x,u,Du),
\end{equation}
where $u$ is an unknown function defined on $\Omega$, $f$ is a
positive function, $\alpha\geq0$ and $0\leq l<k\leq n$. 
$\sigma_k(D^2u)=\sigma_k(\lambda(D^2u))$ denote the $k$-th
elementary symmetric function
\begin{equation*}
\sigma_k(\lambda)=\sum_{1\leq i_1<\cdots<i_k\leq
n}\lambda_{i_1}\cdots\lambda_{i_k}.
\end{equation*}
of the eigenvalues of the Hessian
matrix $D^2u$ for $\lambda=(\lambda_1,\cdots,\lambda_n)\in\mathbb R^n$. Define that~$\eta=(\eta_1,\eta_2,\cdots,\eta_n)$~
with~$\eta_i=\dsum_{j\neq i}\lambda_j$~, then $\eta$ are the
eigenvalues of the matrix $(\Delta u)I-D^2u$.

\par
The operator $\sigma_k$ with the particular matrix $(\Delta u)I-D^2u$ inside has been studied 
in many literatures. It originates from the Gauduchon conjecture \cite{G3,
STW} in complex geometry. In \cite{HL1, HL2}, Harvey-Lawson introduced
and studied the $(n-1)$-plurisubharmonic functions, solving the 
Dirichlet problem with $f=0$ on suitable domains, where the $(n-1)$-plurisubhar-
monic functions satisfy the nonnegative definite complex matrix $(\Delta u)I-D^2u$. 
Furthermore, this operator can also be viewed as a form-type Calabi-Yau equation in the 
work of Fu-Wang-Wu \cite{FWW}, and Tosatti-Weinkove \cite{TW2}
solved that on K$\ddot{\rm a}$hler manifolds.

\par
Another application for the equation $\sigma_k(\eta)=f$ lies in its connection to $p$-convex
hypersurface in prescribed curvature problem. Suppose that $M$ is a
hypersurface in ${\mathbb R}^{n+1}$. For given $1\leq p\leq n$, call
$M$ $p$-convex if $\kappa(X)$ satisfy
$\kappa_{i_1}+\cdots+\kappa_{i_p}\geq0$ for each $X\in M$, where
$\kappa(X)=(\kappa_1,\cdots,\kappa_n)$ are the principal curvatures
of $M$. Chu-Jiao \cite{CJ} established the existence
of hypersurfaces satisfying $\sigma_k(\eta)=f$ in
${\mathbb R}^{n+1}$ for prescribed curvature problem. Dong \cite{D2} proved 
the existence of $p$-convex hypersurface with prescribed curvature under the condition  $p\geq\frac{n}{2}$. For the general case
$1\leq p\leq n$, the $p$-convex hypersurface has been extensively investigated 
by Sha \cite{S2, S3}, Wu \cite{W} and Harvey-Lawson
\cite{HL5}.

\par
The variant forms of the equation $\sigma_k(\eta)=f$ have also been studied. One example is the Hessian quotient
equation $\frac{\sigma_k(\eta)}{\sigma_l(\eta)}=f$. For instance, in the case where $l>0$ and $l+2\leq k\leq n$, 
Chen-Tu-Xiang \cite{CTX} established the Pogorelov type estimates for such equation. Moreover, Dong-Wei \cite{DW} showed that the
Hessian quotient equation is strictly elliptic within the cone
$\Gamma_{k+1}^{\prime}$ (see Definition 1 below). Furthermore, for the case $0\leq l<k<n$, Chen-Dong-Han
\cite{CDH} derived both interior estimates and Pogorelov type
estimates.

\par
Replacing $\eta$ with $\lambda(D^2u)$ turns the operator
$\sigma_k(\eta)$ into the following classic $k$-Hessian
equation
\begin{equation*}
\sigma_k(D^2u)=f(x,u,Du).
\end{equation*}
The existence of solutions to this equation is generally
established via the continuity method and prior estimates. 
Caffarelli-Nirenberg-Spruck \cite{CNS3} resolved the Dirichlet
problem when the right hand side function $f$ is independent of the gradient term. 
However, for $f$ depending on the
gradient term, deriving $C^2$ estimates of the solution
remains a long-standing challenge. There have been some works contributed to
this issue, including \cite{CW, GLL, JW, Q, WY}.

In recent years, there has been increasingly attention on the fully nonlinear equations arising from linear
combinations of elementary symmetric functions. For example, the following
curvature equations and complex Hessian equations
$$\dsum_{s=0}^k\alpha_s\sigma_k(\kappa(\chi))=f(\chi,\upsilon(\chi)),\quad
\chi\in M,$$
$$\dsum_{s=0}^k\alpha_s\sigma_k(\lambda)=f(z,u,Du)$$
were investigated by Li-Ren-Wang \cite{LRW} and Dong \cite{Dws}. For the
 curvature equations
$$\sigma_k(W_u(x))+\alpha
\sigma_{k-1}(W_u(x))=\dsum_{l=0}^{k-2}\alpha_l(x)\sigma_l(W_u(x)),\quad
x\in\mathbb{S}^n,$$ Guan-Zhang \cite{GZ} and Zhou \cite{Zhou}
studied curvature estimates and interior gradient estimates
respectively. Recently, Liu and Ren \cite{LR} established the
Pogorelov type $C^{2}$ estimations for $(k-1)$-convex and $k$-convex
solutions of the following sum Hessian equations
\begin{equation*}
 \sigma_k(\lambda(u_{ij})) + \alpha\sigma_{k-1}(\lambda(u_{ij})) =
f(x, u,\nabla u).
\end{equation*}

\par
Motivated by the work of Chen-Dong-Han \cite{CDH} and Chen-Tu-Xiang
\cite{CTX}, the interior estimates and the Pogorelov type
estimates for a class of sum Hessian equations of the form (\ref{e1.1}) were obtained for the specific case $l=0$ in \cite{RW}. 
We now proceed to study these estimates in the broader context of a general $l$.

\par
First, we give the definition of $k$-convex and $k$-admissible.

\par
\begin{defi}\label{defi1}
For a domain $\Omega\subset{\mathbb R}^n$, a function $u\in
C^2(\Omega)$ is called $k$-convex if the eigenvalues
$\lambda(x)=(\lambda_1(x),\cdots,\lambda_n(x))$ of the Hessian
$\nabla^2u(x)$ are in $\Gamma_k$ for all $x\in\Omega$, where
$\Gamma_k$ is the Garding's cone
\begin{equation*}
\Gamma_k=\{\lambda\in{\mathbb R}^n|\sigma_m(\lambda)>0,
m=1,\cdots,k\},
\end{equation*}
Similarly, a function $u\in C^2(\Omega)$ is called $k$-admissible if
the eigenvalues $\lambda(x)=(\lambda_1(x),\cdots,\lambda_n(x))$ of
the Hessian $\nabla^2u(x)$ are in $\Gamma_k^{\prime}$ for all
$x\in\Omega$, where
\begin{equation*}
\Gamma_k^{\prime}=\{\lambda\in{\mathbb R}^n|\sigma_m(\eta)>0,
m=1,\cdots,k\},
\end{equation*}
\end{defi}

\par
Here we list the main results.

\par
\begin{theo}\label{theo1}
Suppose that $u\in C^4(B_R(0))$ is a $(k-1)$-admissible solution of
the sum Hessian equation (\ref{e1.1}), where $B_R(0)$ is a ball
centered at the origin with radius $R$ in ${\mathbb R}^n$, $0\leq l<k<n$
and $f\in C^2(B_R(0)\times\mathbb{R}\times\mathbb{R}^n)$ with
$0<m\leq f\leq M$. Then
\begin{equation}\label{e1.2}
|D^2u(0)|\leq C\big(1+\frac{\sup|Du|}{R}\big),
\end{equation}
where $C$ is a positive constant depending only on $n$, $k$, $l$, $m$, $M$,
$R\sup|Df|$ and $R^2\sup|D^2f|$.
\end{theo}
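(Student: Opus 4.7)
I will run the classical auxiliary-function / maximum-principle argument, adapted to the sum Hessian quotient structure. The useful algebraic starting point is the identity
\begin{equation*}
\sigma_m(\eta)+\alpha\sigma_{m-1}(\eta)=\sigma_m(\eta,\alpha),
\end{equation*}
where $(\eta,\alpha)\in\mathbb{R}^{n+1}$ denotes the tuple obtained by appending the parameter $\alpha$ to $\eta$. Consequently (\ref{e1.1}) can be rewritten as a pure Hessian quotient $\sigma_k(\eta,\alpha)/\sigma_l(\eta,\alpha)=f$, so that the concavity and ellipticity machinery developed by Chen--Dong--Han \cite{CDH} and Dong--Wei \cite{DW} for $\sigma_k/\sigma_l$ can be applied to the enlarged $(n+1)$-tuple, with the extra coordinate acting as a fixed strictly positive perturbation. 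Set
\begin{equation*}
G(D^2u):=\log\sigma_k(\eta,\alpha)-\log\sigma_l(\eta,\alpha)=\log f,
\end{equation*}
and let $G^{ij}=\partial G/\partial u_{ij}$. The $(k-1)$-admissibility of $u$, combined with $\alpha>0$ in the augmented tuple, gives $G^{ij}>0$, a uniform lower bound on $\sum_i G^{ii}$, and an enhanced lower bound on each $G^{ii}$ whose associated $u_{ii}$ is not comparable to the largest eigenvalue.

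Let $\rho(x)=1-|x|^2/R^2$ be the standard cutoff and, for a unit vector $\xi$, consider the auxiliary function
\begin{equation*}
W(x,\xi)=\beta\log\rho(x)+\log u_{\xi\xi}(x)+\phi\bigl(|Du(x)|^2\bigr),
\end{equation*}
where $\beta>0$ is a large parameter and $\phi(s)=-\tfrac{1}{2}\log(2K-s)$ with $K=1+\sup_{B_R}|Du|^2$. Either $W$ is already bounded above by a universal constant (so the theorem is proved), or it attains its maximum at some interior point $x_0\in B_R$ in a direction which we rotate to $e_1$, with $D^2u(x_0)$ diagonal and $u_{11}\geq\cdots\geq u_{nn}$. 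The first-order conditions $W_i(x_0)=0$ give $u_{11i}/u_{11}=-\beta\rho_i/\rho-2\phi'u_p u_{pi}$, while differentiating $G(D^2u)=\log f$ twice in direction $e_1$ and substituting into $G^{ij}W_{ij}\leq 0$ produces an inequality whose favourable terms are the concavity contribution $-G^{ij,pq}u_{ij1}u_{pq1}/u_{11}$, the gradient penalty $\phi''\,G^{ii}(u_p u_{pi})^2$, and the cutoff-curvature term $\beta G^{ii}\rho_{ii}/\rho$, and whose harmful terms are $\sum_i G^{ii}u_{11i}^2/u_{11}^2$ together with the cutoff-gradient piece $C\beta^2 G^{ii}|\rho_i|^2/\rho^2$.

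The main obstacle is absorbing the harmful third-order term $\sum_i G^{ii}u_{11i}^2/u_{11}^2$. Following the strategy of Chen--Dong--Han and Chen--Tu--Xiang, I split the index set into a good part $\mathcal{I}=\{i:u_{ii}\geq-\delta u_{11}\}$ and its complement. On $\mathcal{I}$, the concavity inequality for $\log(\sigma_k/\sigma_l)$, transported to $(\eta,\alpha)$ through the identity above, supplies enough negative contribution to dominate the bad term. On the complement, one uses the enhanced lower bound on $G^{ii}$ together with the gradient penalty $\phi''\,G^{ii}u_{ii}^2 u_1^2$, which becomes favourable precisely when $|u_{ii}|$ is comparable to $u_{11}$. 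The delicate point is that the substitution $\lambda\leftrightarrow\eta$ reverses orderings (largest components of $\eta$ correspond to smallest eigenvalues of $D^2u$), so tracking which entries of $(\eta,\alpha)$ correspond to small versus large eigenvalues through the concavity splitting, while simultaneously exploiting that the strictly positive extra coordinate $\alpha$ is what lets us work with $(k-1)$-admissible rather than $k$-admissible solutions, is where genuinely new input beyond \cite{CDH} is required. Balancing $\beta$, $K$, and $\delta$ and evaluating the resulting bound at $x_0=0$ yields $(\ref{e1.2})$.
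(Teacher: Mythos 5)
The structure you propose is genuinely different from the paper's, and in the wrong direction for this particular theorem. The paper's proof of Theorem \ref{theo1} does \emph{not} use an index-set decomposition $\mathcal{I}=\{i:u_{ii}\geq-\delta u_{11}\}$; that Chen--Tu--Xiang-style splitting is what the paper deploys for the harder $k=n$ case (Theorem \ref{theo3}). For $0\leq l<k<n$, the key simplification is Lemma \ref{lem2.7}(5): for the $\eta$-type operator with $k<n$, \emph{every} diagonal coefficient satisfies $F^{ii}\geq c(n,k,l)\sum_j F^{jj}$. This uniform comparability (a consequence of Lemma \ref{lem2.7}(2), i.e., $S_{k-1}(\eta|n-k+1)\geq\theta S_{k-1}(\eta)$ when $k<n$) means the gradient-penalty term $\frac{g'}{g}\sum_i F^{ii}u_{ii}^2\geq \frac{1}{3A}F^{11}u_{11}^2\geq\frac{c_0}{3A}u_{11}^2\sum_iF^{ii}$ already dominates everything in one stroke, with no case split at all. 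Your proposal imports the heavier machinery from the $k=n$ setting, which, while probably workable, misses the feature that makes $k<n$ structurally easier, and the paper exploits precisely this.

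Two further points. First, the paper's cutoff weight $g(t)=(1-t/A)^{-1/3}$ is engineered so that $g''g-3g'^2>0$; combined with squaring the first-order critical-point equation \eqref{e3.2}, this fully absorbs $-u_{11i}^2/u_{11}^2$ \emph{before} the linearized operator is applied, so the ``harmful third-order term'' you worry about never survives as a $\sum_i G^{ii}u_{11i}^2/u_{11}^2$ contribution needing to be chased through a concavity splitting. You should verify that your $\phi(s)=-\frac12\log(2K-s)$, for which $\phi''=2(\phi')^2$, achieves the analogous cancellation; the paper's choice makes the bookkeeping trivial. Second, your assertion that $\alpha>0$ ``is what lets us work with $(k-1)$-admissible solutions'' is not a load-bearing observation here: the theorem allows $\alpha\geq0$ and the constant does not depend on $\alpha$. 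What is actually used is that $(k-1)$-admissibility together with the equation forces $\eta\in\tilde\Gamma_k$, at which point Lemmas \ref{lem2.4}--\ref{lem2.7} apply uniformly; the augmented-tuple identity $S_m(\eta)=\sigma_m(\eta,\alpha)$ is used (as in Lemma \ref{lem2.5}) to transfer Newton--Maclaurin, not to supply extra positivity.

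In short: your route is a plausible but considerably more complicated detour through the $k=n$ machinery. The paper's argument for $k<n$ is shorter and hinges on the all-$F^{ii}$-comparable property, which you do not invoke. If you want the case-split route, compare instead with the proof of Theorem \ref{theo3}.
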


\par
Following the argument of the proof of Theorem \ref{theo1}, we can
also obtain the Pogorelov type estimates for the Dirichlet problem
\begin{equation}\label{e1.04}
\begin{cases}
\dfrac{\sigma_k(\eta)+\alpha\sigma_{k-1}(\eta)}{\sigma_l(\eta)+\alpha\sigma_{l-1}(\eta)}=f(x,u,Du),&in~~\Omega,\\
u=0,&on~~\partial \Omega.
\end{cases}
\end{equation}
\par
\begin{theo}\label{theo2}
Suppose that $u\in C^4(\Omega)\cap C^2(\overline\Omega)$ is a
$(k-1)$-admissible solution of the problem (\ref{e1.04}) in a
bounded domain $\Omega\subset{\mathbb R}^n$, where $0\leq l<k<n$ and $f\in
C^2(\overline\Omega\times\mathbb{R}\times\mathbb{R}^n)$ with $f>0$.
Then
\begin{equation*}
(-u)|D^2u|\leq C,
\end{equation*}
where $C$ depends only on $n$, $k$, $l$, $m$, $|f|_{C^2}$ and $|u|_{C^1}$.
\end{theo}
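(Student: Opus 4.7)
The strategy is to mimic the interior estimate of Theorem~\ref{theo1}, replacing the spatial cutoff supported in $B_R(0)$ by the solution $-u$ itself. Since $u<0$ in $\Omega$ and $u=0$ on $\partial\Omega$, the function $-u$ vanishes on the boundary and serves as an intrinsic cutoff. To this end I would consider the auxiliary function
\begin{equation*}
W(x) = \beta\log(-u(x)) + \log\lambda_1(D^2u(x)) + \phi\!\left(\frac{|Du(x)|^2}{2}\right),
\end{equation*}
where $\beta>0$ is a large constant to be chosen, $\lambda_1$ denotes the largest eigenvalue of $D^2u$, and $\phi(s)=-\log(1-Ls)$ with $L>0$ small. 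Because $W\to-\infty$ at $\partial\Omega$, it attains its maximum at an interior point $x_0$. After a rotation I may assume $D^2u(x_0)$ is diagonal with $\lambda_1\geq\cdots\geq\lambda_n$, and if $\lambda_1(x_0)$ has higher multiplicity I perturb $\log\lambda_1$ so that it becomes smooth near $x_0$ in the standard way.

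Set $F=\log\dfrac{\sigma_k(\eta)+\alpha\sigma_{k-1}(\eta)}{\sigma_l(\eta)+\alpha\sigma_{l-1}(\eta)}$ and $F^{ii}=\partial F/\partial u_{ii}$ at $x_0$, and let $\mathcal{L}:=F^{ii}\partial_i\partial_i$ be the linearized operator. The critical conditions $W_i(x_0)=0$ give
\begin{equation*}
\frac{u_{11i}}{\lambda_1} = -\beta\frac{u_i}{u} - \phi'(|Du|^2/2)\,u_iu_{ii},
\end{equation*}
and the second-order inequality $\mathcal{L}W(x_0)\leq 0$ produces, after expansion, the \emph{good} terms $\beta\sum_iF^{ii}u_i^2/u^2$ and $\phi'\sum_iF^{ii}\lambda_i^2$, together with the \emph{bad} third-order terms $\sum_iF^{ii}u_{11i}^2/\lambda_1^2$ coming from $\mathcal{L}\log\lambda_1$, plus lower order terms involving $f$ and its derivatives.

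The main obstacle is absorbing the bad third-order terms. Here I would rely on the concavity of $F$ on the cone $\Gamma_{k-1}^{\prime}$ and on the Codazzi-type inequalities for the sum Hessian quotient operator established by Dong-Wei~\cite{DW} and Liu-Ren~\cite{LR}. Following the argument developed for Theorem~\ref{theo1}, one splits the indices $i\neq 1$ into \emph{good} ones, where $\lambda_i$ is comparable to $\lambda_1$, and \emph{bad} ones, where $\lambda_i$ is small. The bad-index third-order terms are dominated by the concavity contribution $-F^{ii,jj}u_{11i}u_{11j}\geq 2\sum_{i\neq 1}(F^{ii}-F^{11})u_{11i}^2/(\lambda_1-\lambda_i)$, which is positive by the monotonicity of $F^{ii}$ in $\lambda_i$; the good-index and diagonal terms are eliminated by using the critical point identity to substitute for $u_{11i}/\lambda_1$, which produces contributions absorbable into $\beta\sum F^{ii}u_i^2/u^2$ and $\phi'\sum F^{ii}\lambda_i^2$ after choosing $\beta$ large and $L$ small.

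Once the bad terms are absorbed, the residual inequality at $x_0$ takes the schematic form $C\lambda_1+C_\beta\geq \beta\sum_iF^{ii}u_i^2/u^2$, and combining this with the critical point identity to relate $|Du|$ to $|u|$ and $\lambda_1$, together with the lower bound $\sum_iF^{ii}\geq c>0$ guaranteed by $(k-1)$-admissibility, yields the pointwise bound $(-u(x_0))\lambda_1(x_0)\leq C$. Since $W$ attains its maximum at $x_0$, the inequality $(-u)\lambda_1\leq C$ propagates to all of $\Omega$, which proves the theorem.
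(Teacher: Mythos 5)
Your overall strategy---replace the spatial cutoff $\rho = 1-|x|^2$ from Theorem~\ref{theo1} by $-u$ and run a maximum principle argument with a gradient weight---is the right one, and it is what the paper explicitly says to do. But the mechanism you describe for closing the estimate does not match the paper's, and the mismatch is not cosmetic: it determines whether you get the claimed sharp bound $(-u)|D^2u|\leq C$ or only the weaker $(-u)^\beta|D^2u|\leq C$.

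The decisive ingredient in the paper's proof of Theorem~\ref{theo1} is Lemma~\ref{lem2.7}(5): for $0\leq l<k<n$ one has $F^{11}\geq c_0\sum_i F^{ii}$. This converts the single retained positive term $\frac{g'}{g}F^{11}u_{11}^2$ into $c\,u_{11}^2\sum_iF^{ii}$, which then directly dominates the cutoff terms $\sum_iF^{ii}\bigl(\frac{u_{ii}}{u}-3\frac{u_i^2}{u^2}\bigr)$; the third-order terms $u_{11i}^2/u_{11}^2$ are eliminated entirely through the critical-point identity together with the algebraic condition $g''g-3(g')^2>0$, with \emph{no} index decomposition. You never invoke Lemma~\ref{lem2.7}(5). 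Instead you describe a good/bad index split controlled by the off-diagonal concavity term, which you attribute to ``the argument developed for Theorem~\ref{theo1}''; that attribution is inaccurate---it is the Chen--Tu--Xiang style mechanism the paper deploys in Section~4 for $k=n$, precisely because $F^{11}\geq c_0\sum F^{ii}$ is unavailable there. That mechanism forces $\beta$ large (as you explicitly take), and the maximum principle then delivers only $(-u)^\beta\lambda_1\leq C$, which is the conclusion of Theorem~\ref{theo3}, not the sharp estimate of Theorem~\ref{theo2}. Your final assertion ``$(-u)\lambda_1(x_0)\leq C$'' therefore does not follow from the inequality you actually obtain.

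A secondary but concrete issue: with $\phi(s)=-\log(1-Ls)$ one has $g=e^\phi=(1-Ls)^{-1}$, and then $g''g-3(g')^2=-L^2(1-Ls)^{-4}<0$, so the sign condition the paper uses to absorb $u_i^2u_{ii}^2$ after substituting the critical-point identity fails. The paper takes $g(t)=(1-t/A)^{-1/3}$ exactly so that $g''g-3(g')^2>0$. To prove Theorem~\ref{theo2} as stated, use $\varphi=\log(-u)+\log g(|Du|^2/2)+\log u_{11}$ with $\beta=1$ and the paper's $g$, differentiate as in Section~3, invoke Lemma~\ref{lem2.4} for the concavity bound on $\sum F^{ij,rs}u_{ij1}u_{rs1}$, and then apply Lemma~\ref{lem2.7}(5) and Lemma~\ref{lem2.7}(6) at the final step.
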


\par
For the case $k=n$, we still have the following result.

\begin{theo}\label{theo3}
Suppose that $u\in C^4(\Omega)\cap C^2(\overline\Omega)$ is a
$(n-1)$-admissible solution of the problem (\ref{e1.04}) in a
bounded domain $\Omega\subset{\mathbb R}^n$, where $k=n$, $0\leq l<n-1$ and $f\in
C^2(\overline\Omega\times\mathbb{R}\times\mathbb{R}^n)$ with $f>0$.
Then
\begin{equation*}
(-u)^{\beta}|D^2u|\leq C,
\end{equation*}
where $C$ depends only on $n$, $l$, $|f|_{C^2}$ and $|u|_{C^1}$.
\end{theo}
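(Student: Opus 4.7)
The plan is to adapt the Pogorelov-type argument underlying Theorem~\ref{theo2} to the critical case $k=n$, introducing an extra exponent $\beta>1$ on $(-u)$ in the auxiliary function in order to compensate for the extra degeneracy that appears in this regime. Explicitly, I would work with
$$\Phi(x)=\beta\log(-u(x))+\log\lambda_1(D^2u(x))+\frac{A}{2}|Du(x)|^2,$$
where $\lambda_1$ denotes the largest eigenvalue of $D^2u$, $A$ is a large constant depending on $|u|_{C^1}$ and $|f|_{C^2}$, and $\beta>1$ is to be chosen. Since $u$ vanishes on $\partial\Omega$, $\Phi$ attains its supremum at some interior point $x_0\in\Omega$. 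After a rotation one diagonalizes $D^2u(x_0)$ so that $u_{ii}(x_0)=\lambda_i$ with $\lambda_1\geq\cdots\geq\lambda_n$; the goal is to prove $\lambda_1(x_0)(-u(x_0))^\beta\leq C$, after which the bound propagates from $x_0$ to the whole of $\Omega$.

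Writing the equation in the form $F(D^2u)=\log f$ with
$$F(D^2u)=\log[\sigma_n(\eta)+\alpha\sigma_{n-1}(\eta)]-\log[\sigma_l(\eta)+\alpha\sigma_{l-1}(\eta)],$$
I would compute the linearized operator $L=F^{ij}\partial_{ij}$ via the chain rule through $w_{ij}=(\Delta u)\delta_{ij}-u_{ij}$, so that $F^{ii}=\sum_{p}\tilde F^{pp}-\tilde F^{ii}$ where $\tilde F^{pq}=\partial F/\partial w_{pq}$. The critical point conditions $\Phi_i(x_0)=0$ give $(\lambda_1)_i/\lambda_1=-\beta u_i/u-Au_iu_{ii}$, while $L\Phi(x_0)\leq 0$, combined with the standard perturbation trick to handle multiplicity of $\lambda_1$ and with the equation differentiated twice, produces an inequality in which the positive terms $A\sum F^{ii}u_{ii}^2$, $\beta\sum F^{ii}u_{ii}/(-u)$ and $-\beta(\beta-1)\sum F^{ii}u_i^2/u^2$ must absorb the negative second-order commutator coming from the $\sigma_l$ derivatives together with the third-order term $\sum F^{ii}u_{ii1}^2/\lambda_1$.

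The main obstacle is that when $k=n$ one cannot exploit the ``concavity surplus'' of $\sigma_{k+1}$-type available in the proof of Theorem~\ref{theo2} for $l<k<n$; moreover the $\sigma_l$ denominator injects genuinely negative second-order contributions that must be controlled separately. To deal with this I would partition the indices into a ``good'' set $G=\{i:\lambda_i\geq\delta\lambda_1\}$ and a ``bad'' set $B=\{i:\lambda_i<\delta\lambda_1\}$ for a fixed small $\delta=\delta(n,l)$, use the concavity of $\log[\sigma_n+\alpha\sigma_{n-1}]$ on the admissible cone to bound the sum over $G$, and exploit the Monge-Ampere-type lower bound $F^{ii}\geq c\lambda_1$ for $i\in B$ (coming from the fact that $\tilde F^{pp}$ becomes large when the corresponding $\eta_p$ is small) to absorb the bad terms. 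The exponent $\beta$ is chosen precisely so that the term $-\beta(\beta-1)\sum F^{ii}u_i^2/u^2$ dominates the negative contribution of the $\sigma_l$ derivatives, which forces a lower bound on $\beta$ depending explicitly only on $n$ and $l$.

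Once $\beta$ is fixed in this way and $A$ is then taken large enough, the resulting inequalities can be closed to yield $\lambda_1(x_0)(-u(x_0))^\beta\leq C$ at the maximum point. Since $x_0$ is the maximum point of $\Phi$, this bound propagates to every point of $\Omega$, producing the desired estimate $(-u)^\beta|D^2u|\leq C$ with $C$ depending only on $n$, $l$, $|f|_{C^2}$ and $|u|_{C^1}$.
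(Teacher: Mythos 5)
Your proposal diverges from the paper's argument in ways that leave genuine gaps.

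First, a bookkeeping error that undermines the stated key idea. With the test function $\beta\log(-u)$ one has
$\partial_{ii}\bigl[\beta\log(-u)\bigr]=\beta\frac{u_{ii}}{u}-\beta\frac{u_i^2}{u^2}$;
there is no factor $\beta(\beta-1)$, and since $u<0$ the term $-\beta\sum F^{ii}u_i^2/u^2$ is \emph{negative}, not positive. You cannot use it to ``absorb the negative contribution of the $\sigma_l$ derivatives.'' This is the announced mechanism for fixing $\beta$, so as written the strategy does not close. (The factor $\beta(\beta-1)$ would come from using $(-u)^\beta$ rather than $\beta\log(-u)$, but then the positive second-derivative term $\beta(\beta-1)(-u)^{\beta-2}u_i^2$ would appear with a different sign structure and the whole rescaled maximum-principle computation changes; the proposal does not carry this out.)

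Second, you never invoke the hypothesis $l<n-1$, which is precisely what the paper uses to survive the ``degenerate'' configuration. In the paper's Case 1 (when all $\lambda_i$, $i\geq 2$, are $O(\delta\lambda_1)$), the crucial input is that $\sum_i T^{ii}+Q$ controls $\sigma_{n-1}(\eta)/\sigma_l(\eta)\gtrsim u_{11}^{\,n-1-l}$, and this dominates $u_{11}$ only because $n-1-l\geq 1$, i.e. $l<n-1$. Your alternative — splitting indices into good and bad sets and asserting a ``Monge--Amp\`ere-type lower bound $F^{ii}\geq c\lambda_1$ for $i\in B$'' — is not justified and is not the right kind of estimate: for the quotient $S_n/S_l$ one does not get a per-index lower bound growing like $\lambda_1$, but rather a lower bound on the \emph{trace} $\sum_iT^{ii}$ arising from the size of $S_{n-1}/S_l$. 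Without identifying where $l<n-1$ enters, the central difficulty of the theorem is not addressed.

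Third, the test function omits the quadratic term $\frac{A}{2}|x|^2$, which is a genuine ingredient in the paper: it supplies the positive contribution $A\sum_iT^{ii}$ that is combined with $Q$ in Lemma~\ref{lem5.1} to beat $Cu_{11}$. Related to this, the paper takes the gradient coefficient \emph{small} ($\tfrac{a}{2}|Du|^2$ with $a$ small, $\beta$ large), not large as you propose; this matters because the Cauchy--Schwarz estimate on the third derivatives produces error terms of size $a^2$ that must be dominated by $\tfrac{a}{2}T^{ii}u_{ii}^2$. With $A$ large in place of $a$ small, that step fails. Overall the proposal shares the general Pogorelov-with-$\beta$-exponent shape of the paper, but the specific mechanisms that make the case $k=n$ work — the identity $T^{ii}u_{ii}=\psi-\alpha Q$, the $\sum T^{ii}\gtrsim u_{11}^{\,n-1-l}$ bound requiring $l<n-1$, and the small-$a$/large-$A$ balance — are either missing or misidentified.
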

\par

The organization of our paper is as follows. In Section 2, we give
the preliminary knowledge. The proofs of Theorem \ref{theo1} and
Theorem \ref{theo3} are given respectively in Section 3 and Section
4.

\section{Preliminary}
We recall some basic definition and properties of elementary
symmetric function as follows.

\par
The $k^{\text{th}}$ elementary symmetric
function is defined as
\begin{eqnarray*}
\sigma_k(\lambda)=\sum_{1\leq i_1<\cdots<i_k\leq
n}\lambda_{i_1}\cdots \lambda_{i_k}
\end{eqnarray*}
for $\lambda=(\lambda_1,\lambda_2,\cdots,\lambda_n)\in\mathbb{R}^n$
and $1\leq k\leq n$. For convenience, we further set
$\sigma_0(\lambda)=1$ and $\sigma_k(\lambda)=0$ for $k>n$ or $k<0$.

\par
We define the sum Hessian function as
\begin{eqnarray*}
S_k(\lambda)=\sigma_k(\lambda)+\alpha\sigma_{k-1}(\lambda)
\end{eqnarray*}
for $\lambda=(\lambda_1,\lambda_2,\cdots,\lambda_n)\in\mathbb{R}^n$
and $1\leq k\leq n$.

Now we will list some algebraic identities and properties of
$\sigma_k$ and $S_k$. Denote
$(\lambda|a)=(\lambda_1,\cdots,\lambda_{a-1},\lambda_{a+1},\cdots,\lambda_n)$.

\begin{prop}\label{lem2.1}We have
\par
(1) $S_k^{pp}(\lambda):=\frac{\partial
S_k(\lambda)}{\partial\lambda_p}=\sigma_{k-1}(\lambda|p)+\alpha\sigma_{k-2}(\lambda|p)=S_{k-1}(\lambda|p),\quad
p=1,2,\cdots,n$;
\par
(2)
$S_k^{pp,qq}(\lambda):=\frac{\partial^2S_k(\lambda)}{\partial\lambda_p\partial\lambda_q}=S_{k-2}(\lambda|pq),\quad
p,q=1,2,\cdots,n$;
\par
(3) $S_k(\lambda)=\lambda_i S_{k-1}(\lambda|i)+S_k(\lambda|i),\quad
i=1,2,\cdots,n$;
\par
(4) $\dsum_{i=1}^n
S_k(\lambda|i)=(n-k)S_k(\lambda)+\alpha\sigma_{k-1}(\lambda)$;
\par
(5) $\dsum_{i=1}^n\lambda_i
S_{k-1}(\lambda|i)=kS_k(\lambda)-\alpha\sigma_{k-1}(\lambda)$.
\end{prop}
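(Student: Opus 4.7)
The plan is to reduce every identity to its classical counterpart for the elementary symmetric functions $\sigma_k$, then recover the corresponding statement for $S_k$ by linearity in the decomposition $S_k=\sigma_k+\alpha\sigma_{k-1}$. The well known identities I would take as given are: $\partial\sigma_k/\partial\lambda_p=\sigma_{k-1}(\lambda|p)$; for $p\neq q$, $\partial^2\sigma_k/\partial\lambda_p\partial\lambda_q=\sigma_{k-2}(\lambda|pq)$ (and the second derivative vanishes when $p=q$); the one-variable expansion $\sigma_k(\lambda)=\lambda_i\sigma_{k-1}(\lambda|i)+\sigma_k(\lambda|i)$; and the two combinatorial sums $\sum_i\sigma_k(\lambda|i)=(n-k)\sigma_k(\lambda)$ and $\sum_i\lambda_i\sigma_{k-1}(\lambda|i)=k\sigma_k(\lambda)$.

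For (1) and (2), I would just differentiate $S_k=\sigma_k+\alpha\sigma_{k-1}$ once or twice in $\lambda_p$ (and $\lambda_q$), apply the sigma identities, and regroup: e.g.\ for (1) we get $\sigma_{k-1}(\lambda|p)+\alpha\sigma_{k-2}(\lambda|p)$, which is exactly $S_{k-1}(\lambda|p)$. For (2) the same procedure yields $\sigma_{k-2}(\lambda|pq)+\alpha\sigma_{k-3}(\lambda|pq)=S_{k-2}(\lambda|pq)$. Identity (3) is obtained by applying the sigma expansion to both $\sigma_k$ and $\sigma_{k-1}$ in the definition of $S_k(\lambda)$, then collecting terms to get $\lambda_i\bigl(\sigma_{k-1}(\lambda|i)+\alpha\sigma_{k-2}(\lambda|i)\bigr)+\bigl(\sigma_k(\lambda|i)+\alpha\sigma_{k-1}(\lambda|i)\bigr)$.

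For (4) and (5), the point is to be careful with the shift that the $\alpha\sigma_{k-1}$ part introduces. Summing the sigma identity $\sum_i\sigma_k(\lambda|i)=(n-k)\sigma_k(\lambda)$ together with the shifted identity $\sum_i\sigma_{k-1}(\lambda|i)=(n-k+1)\sigma_{k-1}(\lambda)$ gives
\begin{equation*}
\sum_{i=1}^n S_k(\lambda|i)=(n-k)\sigma_k(\lambda)+\alpha(n-k+1)\sigma_{k-1}(\lambda)=(n-k)S_k(\lambda)+\alpha\sigma_{k-1}(\lambda),
\end{equation*}
which is (4). For (5) the same trick with $\sum_i\lambda_i\sigma_{k-1}(\lambda|i)=k\sigma_k(\lambda)$ and $\sum_i\lambda_i\sigma_{k-2}(\lambda|i)=(k-1)\sigma_{k-1}(\lambda)$ yields $k\sigma_k(\lambda)+\alpha(k-1)\sigma_{k-1}(\lambda)=kS_k(\lambda)-\alpha\sigma_{k-1}(\lambda)$.

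There is no real obstacle here; the only thing to be vigilant about is the off-by-one in the coefficient of $\alpha\sigma_{k-1}$ that appears in (4) and (5), which is precisely what prevents the identities for $S_k$ from being a literal copy of the $\sigma_k$ ones and produces the extra $\pm\alpha\sigma_{k-1}(\lambda)$ terms on the right-hand side.
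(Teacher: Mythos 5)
Your proposal is correct, and it carries out exactly the ``direct calculation'' the paper invokes without detail: decompose $S_k=\sigma_k+\alpha\sigma_{k-1}$, apply the classical $\sigma_k$-identities to each piece, and regroup, with careful attention to the index shift in (4) and (5).
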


\begin{proof}
(1)-(5) are obvious by direct calculating.
\end{proof}

The following lemma is due to Newton \cite{N} and Maclaurin \cite{M}. This
inequality is widely used in fully nonlinear partial differential equations and
geometric analysis. 
\begin{lemm}\label{lem2.2}
If~$\lambda=(\lambda_1,\cdots,\lambda_n)\in \Gamma_n$, then
\begin{equation*}
\big[\frac{\sigma_k(\lambda)}{C_n^k}\big]^2\geq\frac{\sigma_{k-1}(\lambda)\sigma_{k+1}(\lambda)}{C_n^{k-1}C_n^{k+1}}, \quad k=1,2,\cdots,n-1,
\end{equation*}
where $C_n^k=\frac{n!}{k!(n-k)!}$, and the inequality is strict unless all entries of $\lambda$ coincide.
\end{lemm}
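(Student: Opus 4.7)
The plan is the classical proof of the Newton--Maclaurin inequality by iterated Rolle. Introduce the normalized symmetric functions $p_k := \sigma_k(\lambda)/C_n^k$, so the target inequality takes the clean form $p_k^{\,2} \geq p_{k-1}\, p_{k+1}$ for $1 \leq k \leq n-1$. Since $\lambda \in \Gamma_n$ forces $\sigma_m(\lambda) > 0$ for all $m$, one checks (e.g.\ by induction on $n$) that all $\lambda_i > 0$; in particular the polynomial
\[
P(t) \;:=\; \prod_{i=1}^{n}\,(t + \lambda_i) \;=\; \sum_{j=0}^{n} \sigma_j(\lambda)\, t^{\,n-j}
\]
has $n$ real (negative) roots, and by Rolle's theorem every derivative $P^{(m)}$ also has only real roots.

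The reduction strategy is to combine differentiation with the polynomial reversal $Q(t) \mapsto t^{\deg Q} Q(1/t)$, which clearly preserves the property of having only real roots (for polynomials whose constant term is nonzero, as is the case here thanks to $\sigma_n(\lambda) > 0$). First I would differentiate $P$ a total of $n-k-1$ times, reducing it to a degree-$(k+1)$ polynomial whose coefficients are positive multiples of $\sigma_0, \sigma_1, \dots, \sigma_{k+1}$. Then I would reverse to a degree-$(k+1)$ polynomial in the ``opposite'' variable, and differentiate $k-1$ more times. Tracking the combinatorial factors, each differentiation applied to the standard-basis expansion $\sum \binom{m}{i}\,p_{i+s}\,t^{\,i}$ simply shifts the index $s$ and lowers $m$, so the terminal polynomial is, up to a strictly positive constant, the quadratic
\[
p_{k-1}\, t^2 \;+\; 2\, p_k\, t \;+\; p_{k+1}.
\]

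Because this quadratic arose from a chain of Rolle reductions applied to $P$, it has only real roots, so its discriminant is nonnegative:
\[
(2 p_k)^2 - 4\, p_{k-1}\, p_{k+1} \;\geq\; 0,
\]
which is exactly the desired inequality. The only genuinely technical step is the coefficient bookkeeping in this chain of differentiations and reversals; the clean way to handle it is to phrase everything in the basis $\{\binom{m}{i}\, t^{\,i}\}$, in which differentiation is a pure index shift. For the equality clause, I would run Rolle backwards: equality in the discriminant forces a double root of the terminal quadratic, which (by strict Rolle applied at each intermediate stage) propagates back to force $P$ itself to have a single $n$-fold root, i.e.\ $\lambda_1 = \cdots = \lambda_n$. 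Contrapositively, as soon as the $\lambda_i$ are not all equal, strict inequality holds. The main obstacle is less a conceptual one than keeping the bookkeeping clean, together with the verification that Rolle at each stage can be made strict when the $\lambda_i$ are distinct.
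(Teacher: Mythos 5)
The paper does not actually prove Lemma~\ref{lem2.2}; it states the inequality as a classical fact and cites Newton and Maclaurin, so there is no ``paper's proof'' to compare against. Your argument is the standard Rolle/reversal proof (the one found in Hardy--Littlewood--P\'olya): it is correct, and the bookkeeping in the basis $\binom{m}{i}\,p_{i+s}\,t^i$ does close up to give the quadratic $p_{k+1}t^2+2p_kt+p_{k-1}$ (your display transposes the outer two coefficients, which is harmless since the discriminant is symmetric). One point of caution in the equality clause: the implication you describe as ``running Rolle backwards'' --- that a double root of the terminal quadratic forces an $n$-fold root of $P$ --- is not valid step-by-step, since a derivative can have a repeated root without the original polynomial having one, and conversely $(t-a)^2(t-b)$ with $a\neq b$ has a derivative with two simple roots. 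The argument that actually works is the forward one you state as the contrapositive: if $P$ has at least two distinct roots and degree at least $3$, then between any two consecutive distinct roots $P'$ has a root strictly inside the open interval, so $P'$ again has at least two distinct roots; this persists under each differentiation and under reversal (which is a bijection on nonzero roots), so the terminal quadratic has two distinct real roots and the discriminant is strictly positive whenever the $\lambda_i$ are not all equal. With that direction made explicit, the proof is complete.
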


\par
For sum Hessian operator~$S_k(\lambda)$~, Li-Ren-Wang \cite{LRW}
have proved its admissible solution set:
\begin{equation*}
\tilde\Gamma_k=\Gamma_{k-1}\cap\{\lambda|S_k>0\}
\end{equation*}
and ~$S_k^{\frac{1}{k}}(\lambda)$~is concave in the set
~$\tilde\Gamma_k$.

\par

\begin{lemm}\label{lem2.3}We have:
\par
(1)~$\tilde\Gamma_k$~are convex cones,
and~$\tilde\Gamma_1\supset\tilde\Gamma_2\supset\cdots\supset\tilde\Gamma_n$;
\par
(2) If~$\lambda=(\lambda_1,\cdots,\lambda_n)\in
\tilde\Gamma_k$~and~$\lambda_1\geq \lambda_2\geq\cdots\geq
\lambda_n$, then~$\lambda_{k-1}>0$,
\begin{equation*}
S_{k-1}(\lambda|n)\geq S_{k-1}(\lambda|n-1)\geq\cdots\geq
S_{k-1}(\lambda|1)>0,
\end{equation*}
and
\begin{equation*}
S_{k-1}(\lambda|k)\geq c(n,k)S_{k-1}(\lambda),
\end{equation*}
where~$c(n,k)$~is a positive constant only depending on~$n$~and~$k$;
\par
(3)If~$\lambda=(\lambda_1,\cdots,\lambda_n)\in \tilde\Gamma_k$, then
for any~$(\xi_1,\cdots,\xi_n)$, we have
\begin{equation*}
\sum_{p,q}\frac{\partial^2[\frac{S_k(\lambda)}{S_l(\lambda)}]}{\partial\lambda_p\partial\lambda_q}\xi_p\xi_q\leq(1-\frac{1}{k-l})\frac{\Big[\Sigma_p
\frac{\partial[\frac{S_k(\lambda)}{S_l(\lambda)}]}{\partial\lambda_p}\xi_p\Big]^2}{\frac{S_k(\lambda)}{S_l(\lambda)}}.
\end{equation*}
\end{lemm}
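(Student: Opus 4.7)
The proof has three parts. I would tackle (1) and (2) first since they rely only on the algebraic identities of Proposition \ref{lem2.1} and standard Garding-cone facts, while (3) contains the analytic content.

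For (1), I use the decomposition $\tilde\Gamma_k = \Gamma_{k-1} \cap \{S_k>0\}$. The Garding cone $\Gamma_{k-1}$ is classically convex, and the quoted concavity of $S_k^{1/k}$ on $\tilde\Gamma_k$ implies that along any segment joining two points of the cone the function $S_k^{1/k}$ stays strictly positive; hence $\tilde\Gamma_k$ is convex. The inclusion $\tilde\Gamma_k\subset\tilde\Gamma_{k-1}$ uses $\Gamma_{k-1}\subset\Gamma_{k-2}$ (Garding) together with the fact that on $\Gamma_{k-1}$ both $\sigma_{k-1}$ and $\sigma_{k-2}$ are positive, so $S_{k-1}=\sigma_{k-1}+\alpha\sigma_{k-2}>0$ since $\alpha\geq 0$.

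For (2), applying Proposition \ref{lem2.1}(3) to $S_{k-1}$ twice (once with the index $i$ extracted first and once with $j$) yields
\begin{equation*}
S_{k-1}(\lambda|j)-S_{k-1}(\lambda|i)=S_{k-2}(\lambda|ij)(\lambda_i-\lambda_j).
\end{equation*}
On $\tilde\Gamma_k\subset\Gamma_{k-1}$ the factor $S_{k-2}(\lambda|ij)=\sigma_{k-2}(\lambda|ij)+\alpha\sigma_{k-3}(\lambda|ij)$ is positive by the standard restricted Garding positivity, which gives the monotonicity; positivity of $S_{k-1}(\lambda|1)$ and of $\lambda_{k-1}$ are then classical consequences of $\lambda\in\Gamma_{k-1}$. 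For the lower bound $S_{k-1}(\lambda|k)\geq c(n,k)S_{k-1}(\lambda)$ I would apply the level-$(k-1)$ analog of Proposition \ref{lem2.1}(4),
\begin{equation*}
\sum_{i=1}^n S_{k-1}(\lambda|i)=(n-k+1)S_{k-1}(\lambda)+\alpha\sigma_{k-2}(\lambda),
\end{equation*}
combined with the monotonicity just established: the $n-k+1$ largest terms (indices $i\geq k$) are each at least $S_{k-1}(\lambda|k)$, which after rearrangement produces the required bound with $c(n,k)$ depending only on $n$ and $k$.

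For (3), the plan is to reduce the stated inequality to concavity of $G:=(S_k/S_l)^{1/(k-l)}$ on $\tilde\Gamma_k$. Writing $F=S_k/S_l=G^{k-l}$ and differentiating twice, a direct chain-rule computation gives
\begin{equation*}
F_{pq}\xi_p\xi_q=(k-l)\,F^{1-1/(k-l)}\,G_{pq}\xi_p\xi_q+\Bigl(1-\frac{1}{k-l}\Bigr)\frac{(F_p\xi_p)^2}{F},
\end{equation*}
so $G_{pq}\xi_p\xi_q\leq 0$ immediately yields the claimed inequality. The main obstacle is therefore the concavity of $(S_k/S_l)^{1/(k-l)}$ on $\tilde\Gamma_k$; here I would adapt the classical Trudinger/Lin--Trudinger argument that $(\sigma_k/\sigma_l)^{1/(k-l)}$ is concave on $\Gamma_k$, taking as input the concavity of $S_k^{1/k}$ on $\tilde\Gamma_k$ supplied by Li--Ren--Wang together with the Newton--Maclaurin inequality of Lemma \ref{lem2.2} applied in the $\alpha$-shifted setting. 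This step is the technical heart of the lemma and is where essentially all the real work lies.
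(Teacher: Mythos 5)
Your plan for part (3) is the same as the paper's: differentiate $G=(S_k/S_l)^{1/(k-l)}$ twice, plug in concavity of $G$, and rearrange. The paper, like you, does not actually prove that $G$ is concave on $\tilde\Gamma_k$; it just asserts it. However, the route you propose for supplying that concavity (``adapt the Lin--Trudinger argument taking as input the concavity of $S_k^{1/k}$'') is both more work than necessary and not obviously closable. The clean argument, consistent with how the paper treats $S_m$ in Lemma \ref{lem2.5}, is the affine substitution $\lambda^*=(\lambda_1,\dots,\lambda_n,\alpha)$: then $S_m(\lambda)=\sigma_m(\lambda^*)$, one checks $\lambda\in\tilde\Gamma_k\Rightarrow\lambda^*\in\Gamma_k$ in $n{+}1$ variables, and $(S_k/S_l)^{1/(k-l)}(\lambda)=(\sigma_k/\sigma_l)^{1/(k-l)}(\lambda^*)$ is the composition of the classical concave function on $\Gamma_k\subset\mathbb R^{n+1}$ with an affine map, hence concave. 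You do not need to re-run the Lin--Trudinger machinery in an ``$\alpha$-shifted'' form.

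For part (2) the paper simply cites \cite{R}, but your sketch has two concrete gaps. First, for positivity of $S_{k-2}(\lambda|ij)$ you invoke ``standard restricted Garding positivity'' on $\Gamma_{k-1}$, but $\lambda\in\Gamma_{k-1}$ alone only yields $(\lambda|ij)\in\Gamma_{k-3}$, i.e.\ positivity of $\sigma_{k-3}(\lambda|ij)$, not of $\sigma_{k-2}(\lambda|ij)$; indeed for $\alpha=0$ (where $\tilde\Gamma_k=\Gamma_k$) you genuinely need the full hypothesis. The fix is again $\lambda^*\in\Gamma_k$ in $n{+}1$ variables, which gives $(\lambda^*|ij)\in\Gamma_{k-2}$, i.e.\ $S_{k-2}(\lambda|ij)=\sigma_{k-2}(\lambda^*|ij)>0$. (Alternatively, the chain of inequalities $S_{k-1}(\lambda|n)\geq\cdots\geq S_{k-1}(\lambda|1)>0$ follows immediately from Lemma \ref{lem2.8} (\ref{e2.13}) applied to the concave function $S_k^{1/k}$ and needs no second-derivative computation at all.) Second, and more seriously, the rearrangement step for the bound $S_{k-1}(\lambda|k)\geq c(n,k)S_{k-1}(\lambda)$ goes the wrong way: because $S_{k-1}(\lambda|k)\leq S_{k-1}(\lambda|k{+}1)\leq\cdots\leq S_{k-1}(\lambda|n)$, the observation that ``the $n-k+1$ largest terms are each at least $S_{k-1}(\lambda|k)$'' only gives $(n-k+1)S_{k-1}(\lambda|k)\leq\sum_i S_{k-1}(\lambda|i)$, which is an \emph{upper} bound on $S_{k-1}(\lambda|k)$, not the lower bound you need. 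The genuine proof of this bound (the one in \cite{R}, paralleling the classical $\sigma$ case) uses that $\lambda_1,\dots,\lambda_{k-1}>0$ and that the single product $\lambda_1\cdots\lambda_{k-1}$ appearing in $S_{k-1}(\lambda|k)$ already controls a fixed fraction of $S_{k-1}(\lambda)$; it is not a consequence of the sum identity plus monotonicity alone.

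Part (1) is essentially fine, modulo a mild circularity (concavity ``on $\tilde\Gamma_k$'' only makes sense once that set is known to be convex), which can be repaired by a connectedness argument along the segment or again by pulling back from $\Gamma_k$ via $\lambda\mapsto\lambda^*$.
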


\begin{proof}
The proof of (1) is obvious. The proof of (2) can be found in
\cite{R}.
\par
Now we will prove (3). Differentiating ~$[\frac{S_k(\lambda)}{S_l(\lambda)}]^\frac{1}{k-l}$~ yields
\begin{equation*}
\frac{\partial[\frac{S_k(\lambda)}{S_l(\lambda)}]^\frac{1}{k-l}}{\partial\lambda_p}=\frac{1}{k-l}[\frac{S_k(\lambda)}{S_l(\lambda)}]^{\frac{1}{k-l}-1}\frac{\partial[\frac{S_k(\lambda)}{S_l(\lambda)}]}{\partial\lambda_p}.
\end{equation*}
Differentiating twice, we get
\begin{align*}
\frac{\partial^2[\frac{S_k(\lambda)}{S_l(\lambda)}]^\frac{1}{k-l}}{\partial\lambda_p\partial\lambda_q}
=&\frac{1}{k-l}(\frac{1}{k-l}-1)[\frac{S_k(\lambda)}{S_l(\lambda)}]^{\frac{1}{k-l}-2}\frac{\partial[\frac{S_k(\lambda)}{S_l(\lambda)}]}{\partial\lambda_p}\frac{\partial[\frac{S_k(\lambda)}{S_l(\lambda)}]}{\partial\lambda_q}\\
&+\frac{1}{k-l}[\frac{S_k(\lambda)}{S_l(\lambda)}]^{\frac{1}{k-l}-1}\frac{\partial^2[\frac{S_k(\lambda)}{S_l(\lambda)}]}{\partial\lambda_p\partial\lambda_q}.
\end{align*}
Notice that ~$[\frac{S_k(\lambda)}{S_l(\lambda)}]^\frac{1}{k-l}$~ is concave in
~$\tilde\Gamma_k$, so
\begin{equation*}
\sum_{p,q}\frac{\partial^2[\frac{S_k(\lambda)}{S_l(\lambda)}]^\frac{1}{k-l}}{\partial\lambda_p\partial\lambda_q}\xi_p\xi_q\leq0.
\end{equation*}
By simplification we finally have
\begin{equation*}
\sum_{p,q}\frac{\partial^2[\frac{S_k(\lambda)}{S_l(\lambda)}]}{\partial\lambda_p\partial\lambda_q}\xi_p\xi_q\leq(1-\frac{1}{k-l})\frac{\Big[\Sigma_p
\frac{\partial[\frac{S_k(\lambda)}{S_l(\lambda)}]}{\partial\lambda_p}\xi_p\Big]^2}{\frac{S_k(\lambda)}{S_l(\lambda)}}.
\end{equation*}
\end{proof}

For~$\lambda=(\lambda_1,\cdots,\lambda_n)$~, note
that~$\eta=(\eta_1,\eta_2,\cdots,\eta_n)$~
with~$\eta_i=\sum\limits_{j\neq i}\lambda_j$~.

\par
Similarly, we define the cone
\begin{equation*}
\Gamma_k^{\prime}=\{\lambda=(\lambda_1,\cdots,\lambda_n):\eta\in\tilde\Gamma_k\}.
\end{equation*}
We now list some basic properties
of~$S_k(\eta)$~with~$\lambda\in\Gamma_k^{\prime}$.

\begin{lemm}\label{lem2.4}
Suppose that~$\lambda=(\lambda_1,\cdots,\lambda_n)\in
\Gamma_k^{\prime}$~,then~$S_k^\frac{1}{k}(\eta)$~is concave
for~$\eta$~. Thus, for~$(\xi_1,\cdots,\xi_n)$~we have
\begin{equation*}
\sum_{p,q}\frac{\partial^2[\frac{S_k(\eta)}{S_l(\eta)}]}{\partial\lambda_p\partial\lambda_q}\xi_p\xi_q\leq(1-\frac{1}{k-l})\frac{\Big[\Sigma_p
\frac{\partial[\frac{S_k(\eta)}{S_l(\eta)}]}{\partial\lambda_p}\xi_p\Big]^2}{\frac{S_k(\eta)}{S_l(\eta)}}.
\end{equation*}
\end{lemm}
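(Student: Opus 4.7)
The key observation is that $\eta$ depends linearly on $\lambda$: setting $L_{pq} = \partial\eta_q/\partial\lambda_p$, we have $L_{pq} = 1-\delta_{pq}$ because $\eta_q = \sum_{j\neq q}\lambda_j$. My plan is to transport the inequality of Lemma \ref{lem2.3}(3), which is formulated in the $\eta$-variables on the cone $\tilde\Gamma_k$, to the $\lambda$-variables via this linear change of coordinates.

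The first assertion is immediate: the hypothesis $\lambda\in\Gamma_k'$ is exactly $\eta\in\tilde\Gamma_k$ by definition, and $S_k^{1/k}$ is concave on $\tilde\Gamma_k$ by the result of Li-Ren-Wang \cite{LRW} already recalled above. Composition with the linear map $\lambda\mapsto\eta$ preserves concavity, so $S_k^{1/k}(\eta(\lambda))$ is concave in $\lambda$ as well.

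For the second inequality, set $F(\eta)=S_k(\eta)/S_l(\eta)$. Because $\eta$ is linear in $\lambda$, the chain rule produces no extra second-order terms:
\begin{equation*}
\frac{\partial F}{\partial\lambda_p} = \sum_q L_{pq}\frac{\partial F}{\partial\eta_q}, \qquad \frac{\partial^2 F}{\partial\lambda_p\partial\lambda_r} = \sum_{q,s} L_{pq}L_{rs}\frac{\partial^2 F}{\partial\eta_q\partial\eta_s}.
\end{equation*}
Define $\zeta_q := \sum_p L_{pq}\xi_p = \sum_{p\neq q}\xi_p$. Then the left-hand side of the target inequality becomes $\sum_{q,s}\frac{\partial^2 F}{\partial\eta_q\partial\eta_s}\zeta_q\zeta_s$, and the expression in brackets on the right-hand side becomes $\sum_q\frac{\partial F}{\partial\eta_q}\zeta_q$. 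Applying Lemma \ref{lem2.3}(3) at the point $\eta\in\tilde\Gamma_k$ with test vector $\zeta$ yields exactly the claimed estimate.

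I do not foresee any real obstacle: the whole argument is a linear change of variables combined with the already-established Lemma \ref{lem2.3}(3). The only subtle point is to confirm that the chain rule introduces no correction terms, which is guaranteed by the linearity of $\lambda\mapsto\eta$. Conceptually, Lemma \ref{lem2.4} is Lemma \ref{lem2.3}(3) pulled back through the linear substitution $\eta_i = \sum_{j\neq i}\lambda_j$, both sides scaling covariantly under the same transformation so that the constant $1-\tfrac{1}{k-l}$ is unchanged.
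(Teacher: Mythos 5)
Your proof is correct and follows essentially the same route as the paper's: both apply Lemma~\ref{lem2.3}(3) at $\eta\in\tilde\Gamma_k$ with the test vector $\zeta_a=\sum_p\frac{\partial\eta_a}{\partial\lambda_p}\xi_p$, using the chain rule for the linear substitution $\eta_i=\sum_{j\neq i}\lambda_j$ to identify the resulting bilinear form and gradient with the $\lambda$-derivative expressions. Your version is a bit more explicit about why linearity of $\lambda\mapsto\eta$ matters (no second-order correction in the chain rule), but the argument is the same.
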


\begin{proof}
From Lemma~\ref{lem2.3} (3), we have
\begin{align*}
\sum_{p,q}\frac{\partial^2[\frac{S_k(\eta)}{S_l(\eta)}]}{\partial\lambda_p\partial\lambda_q}\xi_p\xi_q
=&\sum_{p,q,a,b}\frac{\partial^2[\frac{S_k(\eta)}{S_l(\eta)}]}{\partial\eta_a\partial\eta_b}\frac{\partial\eta_a}{\partial\lambda_p}\frac{\partial\eta_b}{\partial\lambda_q}\xi_p\xi_q\\
\leq&(1-\frac{1}{k-l})\frac{\Big[\Sigma_a\frac{\partial
[\frac{S_k(\eta)}{S_l(\eta)}]}{\partial\eta_a}(\Sigma_p\frac{\partial\eta_a}{\partial\lambda_p}\xi_p)\Big]^2}{\frac{S_k(\eta)}{S_l(\eta)}}\\
=&(1-\frac{1}{k-l})\frac{\Big[\Sigma_p\frac{\partial
[\frac{S_k(\eta)}{S_l(\eta)}]}{\partial\lambda_p}\xi_p\Big]^2}{\frac{S_k(\eta)}{S_l(\eta)}}.
\end{align*}
\end{proof}

\par

\begin{lemm}\label{lem2.5}
If~$\lambda=(\lambda_1,\cdots,\lambda_n)\in \tilde\Gamma_k$, $1\leq
l<k\leq n$, then
\begin{equation}\label{e2.4}
\frac{\sigma_{k-1}(\lambda)\sigma_l(\lambda)}{C_n^{k-1}C_n^l}\geq\frac{\sigma_k(\lambda)\sigma_{l-1}(\lambda)}{C_n^kC_n^{l-1}},
\end{equation}
\begin{equation}\label{e2.5}
\frac{S_{k-1}(\lambda)S_l(\lambda)}{C_{n+1}^{k-1}C_{n+1}^l}\geq\frac{S_k(\lambda)S_{l-1}(\lambda)}{C_{n+1}^kC_{n+1}^{l-1}}.
\end{equation}
\end{lemm}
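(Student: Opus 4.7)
My plan is to deduce both inequalities from the classical Newton inequality
\[
\frac{\sigma_{j-1}(\lambda)\sigma_{j+1}(\lambda)}{C_n^{j-1}C_n^{j+1}} \leq \left(\frac{\sigma_j(\lambda)}{C_n^j}\right)^2, \qquad 1 \leq j \leq n-1,
\]
which in fact holds for \emph{all} real $\lambda \in \mathbb{R}^n$ (the classical form of Newton's inequality, obtained from Rolle's theorem applied to the real-rooted generating polynomial; Lemma \ref{lem2.2} is its sharper version on $\Gamma_n$). Inequality (\ref{e2.5}) will then reduce to (\ref{e2.4}) via the simple observation that $S_j(\lambda) = \sigma_j(\lambda_1,\ldots,\lambda_n,\alpha)$.

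For (\ref{e2.4}), on $\tilde\Gamma_k \subset \Gamma_{k-1}$ the intermediate values $\sigma_{l-1}(\lambda),\ldots,\sigma_{k-1}(\lambda)$ are all strictly positive, so Newton's inequality can be rewritten as the monotonicity statement that $r_j := (\sigma_j(\lambda)/C_n^j)/(\sigma_{j-1}(\lambda)/C_n^{j-1})$ is non-increasing in $j$ for $1 \leq j \leq k$. Comparing $r_l \geq r_k$ and cross-multiplying yields (\ref{e2.4}) immediately.

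For (\ref{e2.5}), set $\tilde\lambda := (\lambda_1,\ldots,\lambda_n,\alpha) \in \mathbb{R}^{n+1}$. Expanding $\sigma_j$ in the last coordinate gives
\[
\sigma_j(\tilde\lambda) = \sigma_j(\lambda) + \alpha\,\sigma_{j-1}(\lambda) = S_j(\lambda)
\]
for every $j$. Since $\lambda \in \tilde\Gamma_k = \Gamma_{k-1}\cap\{S_k>0\}$ and $\alpha \geq 0$, each $\sigma_j(\tilde\lambda) = S_j(\lambda)$ is strictly positive for $1 \leq j \leq k$, so $\tilde\lambda$ lies in the Garding cone $\Gamma_k$ associated to $\mathbb{R}^{n+1}$. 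Applying (\ref{e2.4}) to $\tilde\lambda$ with $n$ replaced by $n+1$, and substituting $\sigma_j(\tilde\lambda) = S_j(\lambda)$ back in, produces exactly (\ref{e2.5}).

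The only subtle point is the version of Newton's inequality that I am invoking: Lemma \ref{lem2.2} is stated on $\Gamma_n$, whereas what I actually need is the weaker bound, valid for arbitrary real $\lambda$ and purely classical. Beyond citing (or briefly reproducing) that fact, I do not anticipate any obstacle; the whole argument is essentially the standard Newton--Maclaurin chaining, with the sole new ingredient being the identification $S_j(\lambda) = \sigma_j(\lambda_1,\ldots,\lambda_n,\alpha)$ that upgrades the binomial coefficients from $C_n^\cdot$ to $C_{n+1}^\cdot$.
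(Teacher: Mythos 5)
Your proof is correct, and its core structure is the same as the paper's: derive (\ref{e2.4}) from Newton's inequality by chaining the ratios $p_j/p_{j-1}$, then obtain (\ref{e2.5}) by the substitution $S_j(\lambda)=\sigma_j(\lambda_1,\ldots,\lambda_n,\alpha)$ and the corresponding passage from $C_n^{\,\cdot}$ to $C_{n+1}^{\,\cdot}$. There are, however, two small differences worth noting. First, the paper proves (\ref{e2.4}) by a case split on the sign of $\sigma_k(\lambda)$: when $\sigma_k\leq0$ the inequality is trivial by sign considerations, and when $\sigma_k>0$ one has $\lambda\in\Gamma_k$ and invokes Lemma \ref{lem2.2}. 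Your ratio-monotonicity argument handles both cases in one stroke, since the chain $r_l\geq r_{l+1}\geq\cdots\geq r_k$ requires only $\sigma_0,\ldots,\sigma_{k-1}>0$ (guaranteed by $\lambda\in\Gamma_{k-1}$) and makes no assumption on the sign of $\sigma_k$; this is a genuine, if minor, streamlining. Second, you explicitly flag the point that the paper leaves implicit: Lemma \ref{lem2.2} as stated requires $\lambda\in\Gamma_n$, whereas what both proofs actually need is the classical form of Newton's inequality valid for arbitrary real $\lambda$ (or at least on $\Gamma_k$). The paper's invocation ``by using Lemma \ref{lem2.2}'' after establishing only $\lambda\in\Gamma_k$ is technically imprecise; your remark closes that gap. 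For (\ref{e2.5}) you correctly verify that $\tilde\lambda=(\lambda_1,\ldots,\lambda_n,\alpha)$ lies in $\Gamma_k\subset\mathbb{R}^{n+1}$, which is all that your proof of (\ref{e2.4}) requires, so the application is legitimate.
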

\begin{proof}
First we prove (\ref{e2.4}). For the case $\sigma_k(\lambda)\leq0$, the inequality is obviously
correct. If $\sigma_k(\lambda)>0$, from $\lambda\in\Gamma_{k-1}$ we have
$\lambda\in\Gamma_k$. By using Lemma \ref{lem2.2} we complete the proof.
\par
Note that $S_k(\lambda)=\sigma_k(\lambda^*)$, where $\lambda^*=(\lambda_1,\cdots,\lambda_n,\alpha)$. Combining with (\ref{e2.4}), we derive (\ref{e2.5}).
\end{proof}

\par

\begin{lemm}\label{lem2.6}
Suppose that~$\lambda=(\lambda_1,\cdots,\lambda_n)\in
\tilde\Gamma_k$, $0\leq l<k$ and $0\leq q<p$, then
\begin{equation*}
(\frac{S_k}{S_l})^{\frac{1}{k-l}}\leq(\frac{S_p}{S_q})^{\frac{1}{p-q}}
\end{equation*}
for $k\geq p$ and $l\geq q$.
\end{lemm}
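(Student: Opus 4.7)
The plan is to reduce the claim to the log-concavity (in the index $j$) of the sum Hessian $S_j(\lambda)$, and then invoke the standard chord-slope monotonicity for concave sequences. More precisely, once the one-step inequality
\[
S_{j-1}^2 \geq S_j\, S_{j-2}, \qquad 2 \leq j \leq k,
\]
is established, the discrete sequence $j \mapsto \log S_j(\lambda)$ is concave on $\{0, 1, \dots, k\}$. For any concave sequence $f$, the chord slope $(f(a)-f(b))/(a-b)$ (with $a > b$) is nonincreasing in each argument separately, so taking $f = \log S$ gives
\[
\frac{\log S_k - \log S_l}{k - l} \leq \frac{\log S_p - \log S_q}{p - q}
\]
whenever $k \geq p$ and $l \geq q$, and exponentiating is exactly the claim.

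To produce the log-concavity I would apply Lemma \ref{lem2.5} with the index pair $(j, j-1)$, which yields
\[
\frac{S_{j-1}^2}{(C_{n+1}^{j-1})^2} \geq \frac{S_j\, S_{j-2}}{C_{n+1}^{j}\, C_{n+1}^{j-2}}.
\]
Since the binomial coefficients are themselves log-concave in $j$, i.e.\ $(C_{n+1}^{j-1})^2 \geq C_{n+1}^{j}\, C_{n+1}^{j-2}$ (an elementary computation gives the ratio $j(n-j+3)/[(j-1)(n-j+2)] \geq 1$), this strengthens to the unnormalized inequality $S_{j-1}^2 \geq S_j\, S_{j-2}$. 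Positivity of $S_m$ for every $0 \leq m \leq k$ follows from $\lambda \in \tilde\Gamma_k = \Gamma_{k-1} \cap \{S_k > 0\}$ together with $\alpha \geq 0$, so $S_m = \sigma_m + \alpha \sigma_{m-1} > 0$ for $m \leq k-1$ while $S_k > 0$ by assumption, and the logarithms are all well defined.

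The remaining step is purely combinatorial: from concavity of $\log S$, one moves from the chord determined by $(k, l)$ to the chord determined by $(p, q)$ by comparing intermediate chord slopes. In the nested case $q \leq l < p \leq k$ this reduces to two single-variable monotonicity steps, $h(k,l) \leq h(p,l) \leq h(p,q)$. In the disjoint case $q < p \leq l < k$ one chains the three successive chords on $[q,p]$, $[p,l]$, $[l,k]$ and uses concavity directly to conclude that the slope on $[l,k]$ is no larger than the slope on $[q,p]$. I expect the main ``obstacle'' here to be nothing more than the bookkeeping of these index orderings; the genuine content is contained in Lemma \ref{lem2.5} and the log-concavity of binomial coefficients, so no new analytic input is required.
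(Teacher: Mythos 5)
Your proposal is correct, and it rests on exactly the same analytic input as the paper's proof, namely the one-step log-concavity $S_{i}^2 \geq S_{i+1}S_{i-1}$ extracted from Lemma \ref{lem2.5} (together with log-concavity of the binomial coefficients $C_{n+1}^j$, which the paper uses implicitly and you make explicit). The difference is purely in the packaging of the chaining step: the paper runs an explicit induction on $k$, handling $p \leq k$ and $p = k+1$ as separate sub-cases, whereas you rephrase the same fact as chord-slope monotonicity of the concave sequence $j \mapsto \log S_j(\lambda)$ on $\{0,\dots,k\}$ and then split into the nested ($q \leq l < p \leq k$) and disjoint ($q < p \leq l < k$) configurations. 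The chord-slope formulation is slightly cleaner and avoids the somewhat awkward special case in the paper's induction, but it requires the same bookkeeping of index orderings that you flag; neither route introduces any genuinely new analytic idea beyond Lemma \ref{lem2.5} and the positivity $S_m > 0$ for $0 \leq m \leq k$ on $\tilde\Gamma_k$, which you correctly verify.
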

\begin{proof}
The proof follows a similar approach to that in \cite{S4}. From (\ref{e2.5}) we have $\frac{S_{i+1}}{S_i}\leq\frac{S_i}{S_{i-1}}$ for $1\leq
i<k$. We use induction to complete the proof.
\par
For the case $k=1$, the inequality holds obviously. For $k\geq2$,
suppose the inequality holds, then
\begin{align*}
(\frac{S_{k+1}}{S_l})^{\frac{1}{k+1-l}}=&(\frac{S_{k+1}}{S_k}\frac{S_k}{S_l})^{\frac{1}{k+1-l}}\leq(\frac{S_k}{S_{k-1}}\frac{S_k}{S_l})^{\frac{1}{k+1-l}}\nonumber\\
\leq&\big[(\frac{S_p}{S_q})^{\frac{1}{p-q}}(\frac{S_p}{S_q})^{\frac{k-l}{p-q}}\big]^{\frac{1}{k+1-l}}=(\frac{S_p}{S_q})^{\frac{1}{p-q}}.
\end{align*}
On the other hand, if $p=k+1$, then
\begin{equation*}
(\frac{S_{k+1}}{S_l})^{\frac{1}{k+1-l}}\leq(\frac{S_{k+1}}{S_q})^{\frac{1}{k+1-q}}
\end{equation*}
is equivalent to
\begin{equation*}
(\frac{S_{k+1}}{S_q})^{\frac{1}{k+1-q}}\leq(\frac{S_l}{S_q})^{\frac{1}{l-q}}.
\end{equation*}
Since $l<k$, we complete the proof of induction.
\end{proof}

\par

\begin{lemm}\label{lem2.7}
Suppose that~$\lambda=(\lambda_1,\cdots,\lambda_n)\in
\Gamma_k^{\prime}$, and there is an ordering that~$\lambda_1\geq
\lambda_2\geq\cdots\geq \lambda_n$, then we have
\par
(1)~$\eta_1\leq\eta_2\leq\cdots\leq\eta_n$, and~$\eta_{n-k+2}>0$;
\par
(2)~$S_{k-1}(\eta|n-k+1)\geq \theta(n,k) S_{k-1}(\eta)$,
~$0<k<n$;
\par
(3)~$\frac{\partial\big[\frac{S_k(\eta)}{S_l(\eta)}\big]}{\partial\eta_1}\geq\frac{\partial
\big[\frac{S_k(\eta)}{S_l(\eta)}\big]}{\partial\eta_2}\geq\cdots\geq\frac{\partial
\big[\frac{S_k(\eta)}{S_l(\eta)}\big]}{\partial\eta_n}$,  $0\leq
l<k<n$;
\par
(4)~$\frac{\partial\big[\frac{S_k(\eta)}{S_l(\eta)}\big]}{\partial\lambda_1}\leq\frac{\partial
\big[\frac{S_k(\eta)}{S_l(\eta)}\big]}{\partial\lambda_2}\leq\cdots\leq\frac{\partial
\big[\frac{S_k(\eta)}{S_l(\eta)}\big]}{\partial\lambda_n}$, 
$0\leq l<k<n$;
\par
(5)~$\forall 1\leq i\leq n,\frac{\partial
\big[\frac{S_k(\eta)}{S_l(\eta)}\big]}{\partial\lambda_i}\geq
c(n,k,l)\sum_j\frac{\partial
\big[\frac{S_k(\eta)}{S_l(\eta)}\big]}{\partial\lambda_j}$;
\par
(6)~$\sum_i\frac{\partial
\big[\frac{S_k(\eta)}{S_l(\eta)}\big]}{\partial\lambda_i}=(n-1)\sum_i\frac{\partial\big[\frac{S_k(\eta)}{S_l(\eta)}\big]}{\partial\eta_i}\geq
c(n,k,l)f^{1-\frac{1}{k-l}}$ for equation(\ref{e1.1}).
\end{lemm}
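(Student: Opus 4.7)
The plan is to prove the six statements in order, with (1)--(4) being structural (reducing to Lemma \ref{lem2.3} after reindexing) and (5)--(6) requiring quantitative Newton--Maclaurin input.

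First I would note that $\eta_i - \eta_j = \lambda_j - \lambda_i$, so the hypothesis $\lambda_1 \geq \cdots \geq \lambda_n$ immediately gives $\eta_1 \leq \eta_2 \leq \cdots \leq \eta_n$. To invoke Lemma \ref{lem2.3}(2) --- whose argument must be sorted decreasingly --- I would relist $\eta$ as $(\eta_n, \eta_{n-1}, \ldots, \eta_1) \in \tilde\Gamma_k$. The conclusion that the $(k-1)$-th entry of the decreasing list is positive then reads $\eta_{n-k+2} > 0$, and the inequality $S_{k-1}(\mu|k) \geq c(n,k) S_{k-1}(\mu)$ becomes $S_{k-1}(\eta|n-k+1) \geq \theta(n,k) S_{k-1}(\eta)$, since removing the $k$-th element of the decreasing list of $\eta$ means removing $\eta_{n-k+1}$. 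This proves (1) and (2).

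For (3), Lemma \ref{lem2.4} (together with the concavity proved in the proof of Lemma \ref{lem2.3}(3)) asserts that $G(\eta) := [S_k(\eta)/S_l(\eta)]^{1/(k-l)}$ is a concave, symmetric function of $\eta \in \tilde\Gamma_k$. A standard argument gives $G_{\eta_i} \geq G_{\eta_j}$ whenever $\eta_i \leq \eta_j$: the difference $h(t) := G_{\eta_i} - G_{\eta_j}$ evaluated along the line $\eta + t(e_i - e_j)$ has $h'(t) = G_{\eta_i\eta_i} - 2 G_{\eta_i\eta_j} + G_{\eta_j\eta_j} \leq 0$ (concavity), and $h(t^*) = 0$ at the symmetric point $t^* = (\eta_j - \eta_i)/2 \geq 0$, so $h(0) \geq h(t^*) = 0$. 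Since $F = G^{k-l}$ with $G > 0$, $F_{\eta_i} = (k-l) G^{k-l-1} G_{\eta_i}$ inherits the same ordering, which is (3). Part (4) then follows from the chain-rule identity $F_{\lambda_i} = \sum_{j \ne i} F_{\eta_j} = \sum_j F_{\eta_j} - F_{\eta_i}$: since $F_{\eta_i}$ is decreasing in $i$, $F_{\lambda_i}$ is increasing in $i$.

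For (5), by (4) the minimum is $F_{\lambda_1} = \sum_{j \geq 2} F_{\eta_j}$, and since $\sum_j F_{\lambda_j} = (n-1) \sum_j F_{\eta_j}$, the claim reduces to $F_{\eta_1} \leq (1 - c(n-1)) \sum_j F_{\eta_j}$. Using the factorization $S_l(\eta) = \eta_j S_{l-1}(\eta|j) + S_l(\eta|j)$ yields
\begin{equation*}
F_{\eta_j} = \frac{S_{k-1}(\eta|j) S_l(\eta|j) - S_k(\eta|j) S_{l-1}(\eta|j)}{S_l(\eta)^2},
\end{equation*}
and Lemma \ref{lem2.5} applied to the $(n-1)$-tuple $\eta|j$ gives positive lower bounds on each $F_{\eta_j}$. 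Combined with the quantitative estimate $S_{k-1}(\eta|n-k+1) \geq \theta(n,k) S_{k-1}(\eta)$ from (2), this produces at least one index $j \neq 1$ with $F_{\eta_j}$ comparable to $F_{\eta_1}$, yielding (5). For (6), the equality $\sum_i F_{\lambda_i} = (n-1)\sum_i F_{\eta_i}$ comes from summing the chain-rule identity. Writing $F_{\eta_i} = (k-l) F^{1-1/(k-l)} G_{\eta_i}$ and $F = f$, the remaining lower bound reduces to $\sum_i G_{\eta_i} \geq c(n,k,l) > 0$. Applying concavity of $G$ at the diagonal point $t\mathbf{1}$ gives $G(t\mathbf{1}) \leq G(\eta) + t \sum_i G_{\eta_i} - \sum_i G_{\eta_i}\eta_i$; dividing by $t$ and letting $t \to \infty$, using that $G(t\mathbf{1})/t \to [\binom{n}{k}/\binom{n}{l}]^{1/(k-l)}$ (the dominant-term computation of $S_k(t\mathbf{1})/S_l(t\mathbf{1})$ as $t \to \infty$), one gets $\sum_i G_{\eta_i} \geq [\binom{n}{k}/\binom{n}{l}]^{1/(k-l)}$, a positive constant.

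The main obstacle is part (5): parts (3), (4) and (6) flow fairly mechanically from concavity and Newton--Maclaurin, but (5) requires a genuinely quantitative comparison of the dominant partial $F_{\eta_1}$ against the full sum. Carefully tracking constants through the reduction of Lemma \ref{lem2.5} to the $(n-1)$-variable tuples $\eta|j$, and verifying that the explicit constant $\theta(n,k)$ from (2) is strong enough to dominate $F_{\eta_1}$, is the delicate step.
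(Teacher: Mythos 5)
Your treatment of (1)--(4) is essentially the paper's: (1) and (2) reduce, after re-sorting $\eta$ decreasingly, to Lemma \ref{lem2.3}(2), and (3)--(4) follow from concavity of $G=[S_k/S_l]^{1/(k-l)}$ (the paper declares these ``obvious''; you supply the standard concavity argument, which is fine since Lemma \ref{lem2.8} cannot be applied directly to $S_k/S_l$).

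For (5), your ingredients coincide with the paper's (the factorization $F_{\eta_j}=\frac{S_{k-1}(\eta|j)S_l(\eta|j)-S_k(\eta|j)S_{l-1}(\eta|j)}{S_l^2(\eta)}$, the $(n-1)$-variable Newton--Maclaurin inequality, and the constant from (2)), but the sketch stops one step short. Establishing a lower bound $F_{\eta_j}\geq c\,S_{k-1}(\eta)/S_l(\eta)$ for some $j\neq 1$ does not by itself show that $F_{\eta_j}$ is ``comparable to $F_{\eta_1}$'': you also need a matching \emph{upper} bound. The paper gets it by summing $F_{\eta_p}$ over all $p$, invoking Proposition \ref{lem2.1}(4), and dropping the nonnegative terms $\sigma_{k-1}S_l$, $(n-l+1)S_kS_{l-1}$, $\alpha\sigma_{l-2}S_k$ to obtain $\sum_j F_{\lambda_j}\leq (n-1)(n-k+2)\,S_{k-1}(\eta)/S_l(\eta)$. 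With that supplement your argument closes.

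Your (6) is a genuinely different and cleaner route. The paper expands $\sum_i F_{\eta_i}$ into eight products of $\sigma_j(\eta)$'s, applies Newton--Maclaurin term by term to reach $c\,S_{k-1}/S_l$, then invokes Lemma \ref{lem2.6} to convert to $f^{1-1/(k-l)}$. You instead write $\sum_i F_{\eta_i}=(k-l)f^{1-1/(k-l)}\sum_i G_{\eta_i}$ and bound $\sum_i G_{\eta_i}$ via concavity of $G$ at $t\mathbf 1$ plus the asymptotic $G(t\mathbf 1)/t\to[\binom{n}{k}/\binom{n}{l}]^{1/(k-l)}$ as $t\to\infty$. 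The limit step is the right move: because $\alpha>0$ makes $S_k$ inhomogeneous, $G$ is not $1$-homogeneous and one cannot simply evaluate $G(\mathbf 1)$, but the segment from $\eta$ to $t\mathbf 1$ stays in the convex cone $\tilde\Gamma_k$ so the supporting-hyperplane inequality holds for all $t>0$, and the leading term gives the stated constant. This avoids the bookkeeping entirely; both arguments are valid.
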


\begin{proof}
From Lemma~\ref{lem2.2}, we can directly get~(1)~and~(2). The proof of~(3)~and~(4)~are obvious. Now we prove~(5). From~(4), we
only need to prove inequality where~$i=1$. We have
\begin{align*}
\sum_j\frac{\partial \big[\frac{S_k(\eta)}{S_l(\eta)}\big]}{\partial\lambda_j}=&\sum_j\sum_{p\neq j}\frac{S_{k-1}(\eta|p)S_l(\eta)-S_k(\eta)S_{l-1}(\eta|p)}{S_l^2(\eta)}\\
 =&(n-1)\sum_p\frac{S_{k-1}(\eta|p)S_l(\eta)-S_k(\eta)S_{l-1}(\eta|p)}{S_l^2(\eta)}\\
 =&(n-1)\frac{1}{S_l^2(\eta)}\big[(n-k+2)S_{k-1}(\eta)S_l(\eta)-\sigma_{k-1}(\eta)S_l(\eta)\\
 &-(n-l+1)S_k(\eta)S_{l-1}(\eta)-\alpha\sigma_{l-2}(\eta)S_k(\eta)\big]\\
 \leq&(n-1)(n-k+2)\frac{S_{k-1}(\eta)}{S_l(\eta)}.
\end{align*}
If $0\leq l<k<n$, we know $n-l\geq n-k+1\geq2$. Thus by (2) and
(3), we derive $S_{k-1}(\eta|2)\geq
S_{k-1}(\eta|n-k+1)\geq c(n,k)S_{k-1}(\eta)$ and $S_l(\eta|2)\geq
S_l(\eta|n-l)\geq c(n,l)S_l(\eta)$.
\par
Combining the above two inequalities with Lemma~\ref{lem2.5}, we
obtain
\begin{align*}
\frac{\partial
\big[\frac{S_k(\eta)}{S_l(\eta)}\big]}{\partial\lambda_1}=&\sum_{p\neq1}\frac{S_{k-1}(\eta|p)S_l(\eta)-S_k(\eta)S_{l-1}(\eta|p)}{S_l^2(\eta)}\\
=&\sum_{p\neq1}\frac{S_{k-1}(\eta|p)S_l(\eta|p)-S_k(\eta|p)S_{l-1}(\eta|p)}{S_l^2(\eta)}\\
\geq&\frac{(n+1)(k-l)}{k(n-l+1)}\sum_{p\neq1}\frac{S_{k-1}(\eta|p)S_l(\eta|p)}{S_l^2(\eta)}\\
\geq&\frac{(n+1)(k-l)}{k(n-l+1)}\frac{S_{k-1}(\eta|2)S_l(\eta|2)}{S_l^2(\eta)}\\
\geq&c(n,k,l)\frac{S_{k-1}(\eta)}{S_l(\eta)}\geq\sum_j\frac{\partial
\big[\frac{S_k(\eta)}{S_l(\eta)}\big]}{\partial\lambda_j}.
\end{align*}
We therefore obtain (5).
\par
Last, we prove (6). By direct calculating, we have
\begin{align*}
\sum_i\frac{\partial \big[\frac{S_k(\eta)}{S_l(\eta)}\big]}{\partial\eta_i}=&\frac{1}{S_l^2(\eta)}\big[(n-k+1)S_{k-1}(\eta)S_l(\eta)+\alpha\sigma_{k-2}(\eta)S_l(\eta)\\
&-(n-l+1)S_k(\eta)S_{l-1}(\eta)-\alpha S_k(\eta)\sigma_{l-2}(\eta)\big]\\
=&\frac{1}{S_l^2(\eta)}\big[(n-k+1)\sigma_{k-1}\sigma_l-(n-l+1)\sigma_k\sigma_{l-1}\\
&+\alpha(n-k+2)\sigma_{k-2}\sigma_l-\alpha(n-l+1)\sigma_{k-1}\sigma_{l-1}\\
&+\alpha(n-k+1)\sigma_{k-1}\sigma_{l-1}-\alpha(n-l+2)\sigma_k\sigma_{l-2}\\
&+\alpha^2(n-k+2)\sigma_{k-2}\sigma_{l-1}-\alpha^2(n-l+2)\sigma_{k-1}\sigma_{l-2}\big]\\
\geq&\frac{1}{S_l^2(\eta)}\big[\frac{k-l}{k}(n-k+1)\sigma_{k-1}\sigma_l+\alpha\frac{k-l-1}{k-1}(n-k+2)\sigma_{k-2}\sigma_l\\
&+\alpha\frac{k-l+1}{k}(n-k+1)\sigma_{k-1}\sigma_{l-1}+\alpha^2\frac{k-l}{k-1}(n-k+2)\sigma_{k-2}\sigma_{l-1}\big]\\
\geq&c(n,k,l)\frac{S_{k-1}(\eta)}{S_l(\eta)}.
\end{align*}
Combining with Lemma~\ref{lem2.6}, we finally get
\begin{equation*}
\sum_i\frac{\partial\big[\frac{S_k(\eta)}{S_l(\eta)}\big]}{\partial\lambda_i}=(n-1)\sum_i\frac{\partial\big[\frac{S_k(\eta)}{S_l(\eta)}\big]}{\partial\eta_i}\geq
c(n,k,l)\big(\frac{S_k(\eta)}{S_l(\eta)}\big)^{1-\frac{1}{k-l}}\geq
c(n,k,l)f^{1-\frac{1}{k-l}}.
\end{equation*}
We complete the proof.
\end{proof}

\par
Next lemma comes from \cite{B2}.

\begin{lemm}\label{lem2.8}
If~$W=(w_{ij})$~is a real symmetric matrix,
$\lambda_i=\lambda_i(W)$~is one of the eigenvalues $(i=1,\cdots,n)$
and $F=F(W)=f(\lambda(W))$~is a symmetric function
of~$\lambda_1,\cdots,\lambda_n$, then for any real symmetric
matrix~$A=(a_{ij})$~, we have
\begin{eqnarray} \label{e2.12}
\frac{\partial^2F}{\partial w_{ij}\partial w_{st}}a_{ij}a_{st} =
\frac{\partial^2f}{\partial\lambda_p\partial\lambda_q}a_{pp}a_{qq}+2\sum_{p<q}
\frac{\frac{\partial f}{\partial\lambda_p}-\frac{\partial
f}{\partial\lambda_q}}{\lambda_p-\lambda_q}a_{pq}^2.
\end{eqnarray}
Moreover, if $f$ is concave and
$\lambda_1\geq\lambda_2\geq\cdots\geq\lambda_n$, we have
\begin{equation}\label{e2.13}
\frac{\partial f}{\partial\lambda_1}(\lambda)\leq\frac{\partial
f}{\partial\lambda_2}(\lambda)\leq\cdots\leq\frac{\partial
f}{\partial\lambda_n}(\lambda).
\end{equation}
\end{lemm}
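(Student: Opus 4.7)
The plan is to prove the identity (\ref{e2.12}) by reducing to the case where $W$ is diagonal and applying second-order perturbation theory for eigenvalues, and then to derive the ordering (\ref{e2.13}) from concavity combined with the symmetry built into $F$.

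First I would exploit that $F(W) = f(\lambda(W))$ depends only on the spectrum, so $F$ is invariant under orthogonal conjugation $W \mapsto OWO^T$. Both sides of (\ref{e2.12}), regarded as quadratic forms in $A$, transform covariantly under such a change of basis: the left-hand side because it equals $\frac{d^2}{dt^2}\big|_{t=0} F(W+tA)$, and the right-hand side because the spectrum is preserved while the quantities $a_{pp}$ and $a_{pq}$ become the diagonal/off-diagonal entries of $OAO^T$ in the eigenbasis of $W$. This lets me reduce to $W = \mathrm{diag}(\lambda_1, \ldots, \lambda_n)$ with simple spectrum.

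Next, I would label the eigenvalues $\lambda_p(t)$ of $W+tA$ smoothly near $t=0$. Standard Rayleigh--Schr\"odinger perturbation delivers $\lambda_p'(0) = a_{pp}$ and $\lambda_p''(0) = 2\sum_{q \neq p} a_{pq}^2/(\lambda_p - \lambda_q)$. The chain rule then yields
\begin{equation*}
\frac{d^2}{dt^2}\Big|_{t=0} F(W+tA) = \sum_{p,q} f_{pq}\, a_{pp} a_{qq} + 2 \sum_p f_p \sum_{q\neq p} \frac{a_{pq}^2}{\lambda_p - \lambda_q},
\end{equation*}
and symmetrizing the second term in $(p,q)$ converts it to $2\sum_{p<q}\frac{f_p - f_q}{\lambda_p - \lambda_q} a_{pq}^2$, which is exactly (\ref{e2.12}). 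The repeated-eigenvalue case is then recovered by continuity: if $\lambda_p = \lambda_q$, symmetry of $f$ forces $f_p = f_q$ at that point, and the quotient $(f_p - f_q)/(\lambda_p - \lambda_q)$ extends continuously to $f_{pp} - f_{pq}$.

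For (\ref{e2.13}), I would fix $p$ and set $L = \lambda_p - \lambda_{p+1} \geq 0$, then study the auxiliary function $g(s) = f(\lambda_1, \ldots, \lambda_p - s, \lambda_{p+1} + s, \ldots, \lambda_n)$ on $[0, L]$. By symmetry of $f$ one has $g(0) = g(L)$, and by concavity $g$ is concave on $[0,L]$. Any concave function on an interval with equal endpoint values must satisfy $g'(0) \geq 0$, and a direct computation gives $g'(0) = f_{p+1}(\lambda) - f_p(\lambda)$; iterating over consecutive indices produces the full chain in (\ref{e2.13}). The main technical point I expect to have to be careful about is the degenerate case of (\ref{e2.12}) with coinciding eigenvalues, where the individual $\lambda_p(t)$ need not be $C^2$ in $t$ and the chain-rule computation above does not apply directly; the cleanest remedy is to approximate $W$ by matrices with simple spectrum and pass to the limit, using that both sides of (\ref{e2.12}) are polynomials in the entries of $A$ whose coefficients depend continuously on $W$.
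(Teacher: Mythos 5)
Your proof is correct and self-contained. The paper itself offers no argument for this lemma, only citing Ball, Andrews, and Gao--Li--Ma, so it is worth noting that your proof follows the standard route those references take: reduce to diagonal $W$ with simple spectrum by orthogonal invariance of $F$ (and of both sides of the identity as quadratic forms in $A$), invoke the first- and second-order eigenvalue perturbation formulas $\lambda_p'(0)=a_{pp}$ and $\lambda_p''(0)=2\sum_{q\neq p}a_{pq}^2/(\lambda_p-\lambda_q)$, apply the chain rule, and symmetrize the off-diagonal sum in $(p,q)$ to produce the difference quotient $(f_p-f_q)/(\lambda_p-\lambda_q)$. Your handling of the degenerate case is also right: by permutation symmetry $f_p=f_q$ whenever $\lambda_p=\lambda_q$, so the difference quotient extends continuously to $f_{pp}-f_{pq}$, and density of simple-spectrum matrices plus continuity of both sides in $W$ closes the gap. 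For (\ref{e2.13}), the argument via the transposition segment $g(s)=f(\ldots,\lambda_p-s,\lambda_{p+1}+s,\ldots)$ on $[0,L]$, using symmetry to get $g(0)=g(L)$, convexity of the symmetric cone to keep the segment in the domain, and concavity of $g$ to conclude $g'(0)=f_{p+1}-f_p\geq 0$, is exactly the classical proof (and correctly degenerates when $L=0$, where symmetry gives $f_p=f_{p+1}$ directly). No gaps.
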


\begin{proof}
For the proof of (\ref{e2.12}), see Lemma 3.2 in \cite{GLM2} or
\cite{A}. For the proof of (\ref{e2.13}), see Lemma 2.2 in \cite{A}.
\end{proof}

\section{Proof of Theorem 2}

In this section, we prove the interior a priori Hessian estimate for
(\ref{e1.1}). We use the method similar to \cite{CDH}.
\par
Assume $R=1$, otherwise we consider $\tilde
u(x):=\frac{1}{R^2}u(Rx)$ and the equation $S_k(\tilde\eta)=f(Rx)$
in $B_1(0)$. Let
$\tau(x)=\tau(D^2u(x))=(\tau_1,\cdots,\tau_n)\in\mathbb S^{n-1}$ be
the continuous eigenvector field of $D^2u(x)$ corresponding to the
maximum eigenvalue. We consider the auxiliary function
\begin{equation*}
\phi(x)=\rho(x)g(\frac{|Du|^2}{2})u_{\tau\tau},
\end{equation*}
where $\rho(x)=(1-|x|^2)$ and $g(t)=(1-\frac{t}{A})^{-1/3}$ with
$A=sup|Du|^2>0$. Thus $g'(t)=\frac{1}{3A}(1-\frac{t}{A})^{-4/3}$ and
$g''(t)=\frac{4}{9A^2}(1-\frac{t}{A})^{-7/3}$. Assume $\phi(x)$
attains maximum at $x_0\in B_1(0)$. By rotating the coordinate, we
assume $D^2u(x_0)$ is diagonal with $u_{11}(x_0)\geq
u_{22}(x_0)\geq\cdots\geq u_{nn}(x_0)$. Then
$\tau(x_0)=(1,0,\cdots,0)$. Denote $\lambda_i=u_{ii}(x_0),
\lambda=(\lambda_1,\cdots,\lambda_n)$, and then
$\lambda_1\geq\lambda_2\geq\cdots\geq\lambda_n$. Moreover, the test
function
\begin{equation}\label{e3.1}
\varphi=\log\rho+\log g(\frac{|Du|^2}{2})+\log u_{11}
\end{equation}
attains local maximum at $x_0$. In the following, all the
calculations are at $x_0$. Differentiating (\ref{e3.1}) at $x_0$ once
to get that
\begin{equation}\label{e3.2}
0=\varphi_i=\frac{\rho_i}{\rho}+\frac{g'}{g}\sum_ku_ku_{ki}+\frac{u_{11i}}{u_{11}},
\end{equation}
and hence
\begin{equation}\label{e3.3}
\frac{u_{11i}^2}{u_{11}^2}\leq2\frac{\rho_i^2}{\rho^2}+2\frac{{g'}^2}{g^2}u_i^2u_{ii}^2.
\end{equation}
Differentiating (\ref{e3.1}) twice to see that
\begin{align*}
0\geq\varphi_{ii}=&\frac{\rho_{ii}}{\rho}-\frac{\rho_i^2}{\rho^2}+\frac{g''g-{g'}^2}{g^2}\sum_ku_ku_{ki}\sum_lu_lu_{li}\\
&+\frac{g'}{g}\sum_k(u_{ki}u_{ki}+u_ku_{kii})+\frac{u_{11ii}}{u_{11}}-\frac{u_{11i}^2}{u_{11}^2}\\
\geq&\frac{\rho_{ii}}{\rho}-3\frac{\rho_i^2}{\rho^2}+\frac{g''g-3{g'}^2}{g^2}u_i^2u_{ii}^2+\frac{g'}{g}\Big[u_{ii}^2+\sum_ku_ku_{kii}\Big]+\frac{u_{11ii}}{u_{11}}\\
\geq&\frac{\rho_{ii}}{\rho}-3\frac{\rho_i^2}{\rho^2}+\frac{g'}{g}\Big[u_{ii}^2+\sum_ku_ku_{kii}\Big]+\frac{u_{11ii}}{u_{11}},
\end{align*}
where in the last inequality we used
$g''g-3{g'}^2=\frac{1}{9A^2}\big(\frac{1}{1-\frac{t}{A}}\big)^{8/3}>0$.
\par
Let
\begin{equation*}
F^{ij}=\frac{\partial\frac{S_k(\eta)}{S_l(\eta)}}{\partial
u_{ij}}=\begin{cases}
\dfrac{\partial\frac{S_k(\eta)}{S_l(\eta)}}{\partial\lambda_i},&i=j,\\
0,&i\neq j,
\end{cases}
\end{equation*}
and
\begin{equation*}
F^{ij,rs}=\frac{\partial^2\frac{S_k(\eta)}{S_l(\eta)}}{\partial
u_{ij}\partial u_{rs}}.
\end{equation*}
We have
\begin{equation}\label{e3.4}
\sum_iF^{ii}u_{ii1}=\sum_{i,j}F^{ij}u_{ij1}=f_1+f_uu_1+\sum_i\frac{\partial
f}{\partial u_i}u_{i1},
\end{equation}
and from Lemma \ref{lem2.4},
\begin{align}\label{e3.5}
\sum_iF^{ii}u_{ii11}=&\sum_{i,j}F^{ij}u_{ij11}\geq-\sum_{i,j,r,s}F^{ij,rs}u_{ij1}u_{rs1}-C-Cu_{11}-Cu_{11}^2-|\sum_i\frac{\partial
f}{\partial u_i}u_{11i}|\nonumber\\
\geq&-\Big(1-\frac{1}{k}\Big)\frac{(\sum_iF^{ii}u_{ii1})^2}{f}-C-Cu_{11}-Cu_{11}^2\geq-C-Cu_{11}^2.
\end{align}
In (\ref{e3.5}), we used (\ref{e3.2}) and (\ref{e3.4}). Combining
(\ref{e3.3}) with (\ref{e3.4}) and (\ref{e3.5}), we obtain
\begin{align*}
0\geq&\sum_{i=1}^nF^{ii}\varphi_{ii}\nonumber\\
\geq&\sum_{i=1}^nF^{ii}\Big[\frac{\rho_{ii}}{\rho}-3\frac{\rho_i^2}{\rho^2}\Big]+\frac{g'}{g}\sum_{i=1}^nF^{ii}u_{ii}^2\nonumber\\
&+\frac{g'}{g}\sum_ku_k(f_k+f_uu_k+\sum_i\frac{\partial f}{\partial u_k}u_{kk})-C-Cu_{11}\nonumber\\
\geq&\sum_{i=1}^nF^{ii}\Big[\frac{-2}{\rho}-\frac{12}{\rho^2}\Big]+\frac{1}{3A}F^{11}u_{11}^2-C-Cu_{11}.
\end{align*}
\par
From Lemma \ref{lem2.7} (5), we have
\begin{equation}\label{e3.8}
F^{11}\geq c_0\sum_{i=1}^nF^{ii},
\end{equation}
where $c_0$ is a positive constant depending only on $n$, $k$ and $l$. Then we arrive at
\begin{align*}
0\geq&\sum_{i=1}^nF^{ii}\varphi_{ii}\nonumber\\
\geq&\sum_{i=1}^nF^{ii}\Big[\frac{-2}{\rho}-\frac{12}{\rho^2}\Big]+\frac{1}{3A}c_0\lambda_1^2\sum_{i=1}^kF^{ii}-C-Cu_{11}.
\end{align*}
By Lemma \ref{lem2.4} (6), we get that $\sum F^{ii}\geq c_1$, where
$c_1$ is a positive constant depending only on $n$, $k$, $l$ and the
lower bound of $f$. Therefore, from (\ref{e3.8}), we derive that
\begin{equation*}
\rho(x_0)u_{11}(x_0)\leq C(1+\sup|Du|),
\end{equation*}
where $C$ is a positive constant depending only on $n$, $k$, $l$, $m$ and
$|f|_{C^2}$. Similarly, we have
\begin{align*}
u_{\tau\tau}(0)=&\rho(0)u_{\tau\tau}(0)=\frac{\phi(0)}{g\big(\frac{1}{2}|Du(0)|^2\big)}\nonumber\\
\leq&\frac{\phi(x_0)}{g\big(\frac{1}{2}|Du(0)|^2\big)}=\frac{g\big(\frac{1}{2}|Du(x_0)|^2\big)}{g\big(\frac{1}{2}|Du(0)|^2\big)}\rho(x_0)u_{11}(x_0)\nonumber\\
\leq&C(1+\sup|Du|).
\end{align*}
This shows that (\ref{e1.2}) holds.

\section{Proof of Theorem 4}

\par
In this section, we use the ideas from \cite{CTX} to prove
Theorem 4. For convenience, we suppose $U[u]=(\Delta u)I-D^2u$ and
introduce the following notations.
\begin{equation*}
F(U)=\Big[\frac{S_n(U)}{S_l(U)}\Big]^{\frac{1}{n}},\quad
T(D^2u)=F(U),\quad F^{ij}=\frac{\partial F}{\partial U_{ij}},\quad
F^{ij,rs}=\frac{\partial^2F}{\partial U_{ij}\partial U_{rs}},
\end{equation*}
From the notation we have
\begin{equation*}
T^{ii}=\frac{\partial T}{\partial u_{ii}}=\sum_{p,q}^n\frac{\partial
F}{\partial U_{pq}}\frac{\partial U_{pq}}{\partial
u_{ii}}=\sum_{j=1}^n\frac{\partial F}{\partial U_{jj}}\frac{\partial
U_{jj}}{\partial u_{ii}}=\sum_{j=1}^nF^{jj}-F^{ii}.
\end{equation*}
where we used~$U_{ii}=\eta_i=\sum_{j=1}^nu_{jj}-u_{ii}$ in the last
equation. Suppose ~$u\in C^4(\Omega)\cap C^2(\overline{\Omega})$~is
a solution of equation (\ref{e1.1}), $\eta\in\tilde{\Gamma}_n$.
Without loss of generality, by the maximum principle we assume $u<0$
in $\Omega$. Consider the following test function
\begin{equation*}
\tilde
P(x)=\beta~\log(-u)+\log~\lambda_{max}(x)+\frac{a}{2}|Du|^2+\frac{A}{2}|x|^2,
\end{equation*}
where $\lambda_{max}(x)$ is the biggest eigenvalue of the Hessian
matrix $u_{ij}$, $\beta$, $a$ and $A$ are constants which will be
determined later. Suppose $\tilde P$ attains the maximum value in
$\Omega$ at $x_0$. By rotating the coordinates, we diagonalize the
matrix $D^2u=(u_{ij})$ where
\begin{equation*}
u_{ij}(x_0)=u_{ii}(x_0)\delta_{ij},\quad u_{11}(x_0)\geq
u_{22}(x_0)\geq\cdots\geq u_{nn}(x_0),
\end{equation*}
then
\begin{equation*}
U_{11}(x_0)\leq U_{22}(x_0)\leq\cdots\leq U_{nn}(x_0).
\end{equation*}
Thus by (\ref{e2.13}), we get
\begin{equation}\label{e5.1}
F^{11}(x_0)\geq F^{22}(x_0)\geq\cdots\geq F^{nn}(x_0)>0,
\end{equation}
\begin{equation*}
0<T_{11}(x_0)\leq T_{22}(x_0)\leq\cdots\leq T_{nn}(x_0).
\end{equation*}

\par
Now we define a new function
\begin{equation*}
P(x)=\beta~log(-u)+log~u_{11}(x)+\frac{a}{2}|Du|^2+\frac{A}{2}|x|^2,
\end{equation*}
which also attain the maximum value at $x_0$. Differentiating $P$ at
$x_0$ once to obtain
\begin{equation}\label{e5.3}
\frac{\beta u_i}{u}+\frac{u_{11i}}{u_{11}}+au_iu_{ii}+Ax_i=0.
\end{equation}
Differentiating $P$ at $x_0$ twice to get
\begin{equation*}
\frac{\beta u_{ii}}{u}-\frac{\beta
u_i^2}{u^2}+\frac{u_{11ii}}{u_{11}}-\frac{u_{11i}^2}{u_{11}^2}+a\sum_{p=1}^nu_pu_{pii}+au_{ii}^2+A\leq0.
\end{equation*}
Thus, at $x_0$,
\begin{align}
0 & \geq T^{ii}P_{ii}\notag\\
  & \geq\frac{\beta T^{ii}u_{ii}}{u}-\frac{\beta
  T^{ii}u_i^2}{u^2}+\frac{T^{ii}u_{11ii}}{u_{11}}-\frac{T^{ii}u_{11i}^2}{u_{11}^2}\notag\\\label{e5.5}
  &~~+a\sum_{p=1}^nu_pT^{ii}u_{iip}+aT^{ii}u_{ii}^2+A\sum_{i=1}^nT^{ii}.
\end{align}
\par
We now plan to estimate each term in (\ref{e5.5}). By calculating
directly and using Proposition \ref{lem2.1} (5), we have
\begin{align}
T^{ii}u_{ii}
=&\sum_{i=1}^n\left(\sum_{j=1}^nF^{jj}-F^{ii}\right)\left(\frac{1}{n-1}
               \sum_{p=1}^nU_{pp}-U_{ii}\right)\notag\\
             =&\sum_{i=1}^nF^{ii}U_{ii}+\frac{n}{n-1}\sum_{j=1}^nF^{jj}\sum_{p=1}^nU_{pp}\notag\\
              &-\frac{1}{n-1}\sum_{i=1}^nF^{ii}\sum_{p=1}^nU_{pp}-\sum_{i=1}^{n}\sum_{j=1}^{n}F^{jj}U_{ii}\notag\\
             =&\sum_{i=1}^nF^{ii}U_{ii}\notag\\
             =&\frac{1}{n-l}\Big[\frac{S_n(U)}{S_l(U)}\Big]^{\frac{1}{n-l}-1}\frac{(n-l)S_n(U)S_l(U)-\alpha(\sigma_l(U)\sigma_{n-1}(U)-\sigma_n(U)\sigma_{l-1}(U))}{S_l^2(U)}\notag\\\label{e5.6}
             =&\psi-\alpha Q,
\end{align}
where $\psi=f^{\frac{1}{n-l}}$,
$Q=\frac{\psi(\sigma_l(U)\sigma_{n-1}(U)-\sigma_n(U)\sigma_{l-1}(U))}{(n-l)fS_l^2(U)}$. It follows from (\ref{e2.4}) that $Q>0$.
Rewriting equation (\ref{e1.1}) as
\begin{equation}\label{e5.7}
F(U)=\psi.
\end{equation}
Differentiating equation (\ref{e5.7}) once gives
\begin{equation*}
F^{ii}U_{iip}=\psi_p,
\end{equation*}
which yields
\begin{equation}\label{e5.8}
T^{ii}u_{iip}=\psi_p.
\end{equation}
Differentiating equation (\ref{e5.7}) twice gives
\begin{equation}\label{e5.9}
F^{ij,rs}U_{ijp}U_{rsp}+F^{ii}U_{iipp}=\psi_{pp}.
\end{equation}
We estimates $\psi_{11}$ by (\ref{e5.3}),
\begin{align*}
|\psi_{11}| &=\lvert\psi_{x_1x_1}+\psi_{x_1u}\frac{\partial
             u}{\partial x_1}+\sum_{i=1}^n\psi_{x_1Du_i}\frac{\partial Du_i}{\partial x_1}+\psi_{ux_1}\frac{\partial u}{\partial
             x_1}+\psi_{uu}(\frac{\partial u}{\partial x_1})^2\\
            &~~+\sum_{i=1}^n\psi_{uDu_i}\frac{\partial u}{\partial x_1}\frac{\partial Du_i}{\partial x_1}
             +\psi_{u}\frac{\partial^2u}{\partial x_1^2}+\sum_{i=1}^n\psi_{Du_ix_1}\frac{\partial Du_i}{\partial x_1}
             +\sum_{i=1}^n\psi_{Du_iu}\frac{\partial Du_i}{\partial x_1}\frac{\partial u}{\partial
             x_1}\\
            &~~+\sum_{i=1}^n\sum_{j=1}^n\psi_{Du_iDu_j}\frac{\partial Du_i}{\partial x_1}\frac{\partial Du_j}{\partial x_1}
             +\sum_{i=1}^n\psi_{Du_i}\frac{\partial^2Du_i}{\partial
             x_1^2}\rvert\\
            &\leq C(1+u_{11}+u_{11}^2)+\frac{\partial\psi}{\partial u_i}u_{11i}\\
            &\leq C(1+u_{11}+u_{11}^2)+\frac{C\beta u_{11}}{-u}.
\end{align*}
Then using the concavity of $F$, from (\ref{e5.9}) at $x_0$ we have
\begin{align*}
F^{ii}U_{ii11} &\geq-F^{ij,rs}U_{ij1}U_{rs1}-C(1+u_{11}^2)+\frac{C\beta u_{11}}{u}\\
               &\geq-2\sum_{i=2}^nF^{1i,i1}U_{1i1}^2-C(1+u_{11}^2)+\frac{C\beta u_{11}}{u}\\
               &\geq-2\sum_{i=2}^nF^{1i,i1}u_{11i}^2-C(1+u_{11}^2)+\frac{C\beta
               u_{11}}{u},
\end{align*}
which can be rewritten as
\begin{equation}\label{e5.10}
T^{ii}u_{11ii}\geq-2\sum_{i=2}^nF^{1i,i1}u_{11i}^2-C(1+u_{11}^2)+
\frac{C\beta u_{11}}{u}.
\end{equation}
Plugging (\ref{e5.6}), (\ref{e5.8}) and (\ref{e5.10}) into
(\ref{e5.5}), assuming $u_{11}(x_0)\geq1$, at $x_0$ we have
\begin{align}
0 & \geq T^{ii}P_{ii}\notag\\
  & \geq\frac{C\beta}{u}-\frac{\alpha\beta Q}{u}-\frac{\beta T^{ii}u_i^2}{u^2}-\frac{2}{u_{11}}\sum_{i=2}^nF^{1i,i1}u_{11i}^2-C(1+u_{11}^2)\notag\\
  &~~-\frac{T^{ii}u_{11i}^2}{u_{11}^2}+a\sum_{p=1}^nu_p\psi_p+aT^{ii}u_{ii}^2+A\sum_{i=1}^nT^{ii}\notag\\
  & \geq\frac{C\beta}{u}-\frac{\alpha\beta Q}{u}-\frac{\beta T^{ii}u_i^2}{u^2}-\frac{2}{u_{11}}\sum_{i=2}^nF^{1i,i1}u_{11i}^2-\frac{T^{ii}u_{11i}^2}{u_{11}^2}\notag\\\label{e5.11}
  &~~+aT^{ii}u_{ii}^2+A\sum_{i=1}^nT^{ii}-C(1+u_{11}).
\end{align}

\par
To deal with $-Cu_{11}$ and the third derivative terms, we
divide our proof into two cases and determine the positive constant
$\delta$ later. Note that the letter $C$ denotes the constant that
only depends on the known data, such as $\alpha$, $n$,
$\displaystyle\sup_\Omega |u|$ and $\displaystyle\sup_\Omega|Du|$.
Besides, $C$ may be different from line to line.

\par
\textbf{Case~1.}~~For all~$ i\geq2$ at $x_0$,
$|u_{ii}|(x_0)\leq\delta u_{11}(x_0)$.
\par
We prove the following lemma to deal with~$-Cu_{11}$.
\begin{lemm} \label{lem5.1}
For all $i\geq2$,~$|u_{ii}|(x_0)\leq\delta u_{11}(x_0)$, if we
choose $u_{11}(x_0)$ and $A$ sufficiently large, and $\delta$
sufficiently small, then we have at $x_0$
\begin{equation}\label{e5.12}
\sum_{i=1}^nT^{ii}+Q\geq Cu_{11}.
\end{equation}
\end{lemm}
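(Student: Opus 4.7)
The plan is to exploit the pinching to extract sharp asymptotics for $\sigma_j(\eta)$ in terms of $u_{11}(x_0)$ and thereby show that $Q$ alone dominates $u_{11}$ once $\delta$ is small and $u_{11}$ is large; the nonnegative $\sum T^{ii}$ can then be discarded. First, from $|u_{ii}| \leq \delta\, u_{11}$ for $i \geq 2$, one reads $|\eta_1| \leq (n-1)\delta\, u_{11}$ and $\eta_i = u_{11} + O(\delta\, u_{11})$ for $i \geq 2$, so after choosing $\delta$ small we have $\eta_i \in [u_{11}/2,\, 2u_{11}]$ for $i \geq 2$, and consequently $c(n,j)\,u_{11}^j \leq \sigma_j(\eta|1) \leq C(n,j)\,u_{11}^j$ for $0 \leq j \leq n-1$.

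Next, expanding along $\eta_1$ via $\sigma_j(\eta) = \sigma_j(\eta|1) + \eta_1\,\sigma_{j-1}(\eta|1)$ gives
\begin{align*}
\sigma_{n-1}(\eta)\sigma_l(\eta) - \sigma_n(\eta)\sigma_{l-1}(\eta) = &\; \sigma_{n-1}(\eta|1)\sigma_l(\eta|1) + \eta_1\,\sigma_{n-2}(\eta|1)\sigma_l(\eta|1) \\
&+ \eta_1^2\bigl[\sigma_{n-2}(\eta|1)\sigma_{l-1}(\eta|1) - \sigma_{n-1}(\eta|1)\sigma_{l-2}(\eta|1)\bigr].
\end{align*}
The leading term is bounded below by $c(n,l)\,u_{11}^{n-1+l}$; the middle term is $O(\delta\, u_{11}^{n-1+l})$ since $|\eta_1| \leq (n-1)\delta\, u_{11}$, and the final term is $O(\delta^2\, u_{11}^{n-1+l})$. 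Taking $\delta$ small (depending only on $n,l$) makes the leading term dominant, so the entire expression is $\geq \tfrac{1}{2} c(n,l)\, u_{11}^{n-1+l}$.

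Combining with $S_l(\eta) \leq C(n,l,\alpha)\, u_{11}^l$ (for $u_{11} \geq 1$) and the boundedness of $\psi/f = f^{1/(n-l)-1}$ (since $f \in [m,M]$),
\begin{equation*}
Q = \frac{\psi\,(\sigma_l\sigma_{n-1} - \sigma_n\sigma_{l-1})}{(n-l)\, f\, S_l^2} \geq c(n,l,m,M,\alpha)\, u_{11}^{n-1-l}.
\end{equation*}
Because $l < n-1$ gives $n-1-l \geq 1$, we get $Q \geq c\,u_{11}$ for $u_{11}$ sufficiently large, and so (\ref{e5.12}) follows from $\sum T^{ii} \geq 0$.

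The main obstacle is the middle $\eta_1$-linear term, whose sign can be negative: the equation $S_n/S_l = f$ and the boundedness of $f$ force $\eta_1 + \alpha \to 0$ as $u_{11} \to \infty$, so $\eta_1$ typically sits near $-\alpha$ in this regime. Nevertheless $|\eta_1|$ is uniformly controlled by $(n-1)\delta\, u_{11}$, so a single small $\delta$ depending only on $n$ and $l$ absorbs both the first- and second-order $\eta_1$ corrections into half the leading term, without invoking any finer structure of the equation. The parameter $A$ and the largeness of $u_{11}$ play no role in the estimate of $Q$ itself; they enter only through the surrounding argument at (\ref{e5.11}).
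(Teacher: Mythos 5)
Your proof is correct, and it takes a genuinely different route from the paper. You estimate $Q$ alone by expanding the Newton--Maclaurin numerator $\sigma_{n-1}(\eta)\sigma_l(\eta)-\sigma_n(\eta)\sigma_{l-1}(\eta)$ along $\eta_1$ and showing the $\eta_1$-free term $\sigma_{n-1}(\eta|1)\sigma_l(\eta|1)\geq c\,u_{11}^{n-1+l}$ dominates the $O(\delta)$ and $O(\delta^2)$ corrections, so that $Q\geq c\,u_{11}^{n-1-l}\geq c\,u_{11}$; then you discard $\sum_i T^{ii}\geq 0$, which follows from \eqref{e5.1}. The paper instead works with the whole combination $\sum_i T^{ii}+Q$: after writing $\sum_i T^{ii}=(n-1)\sum_i F^{ii}$ and computing $\sum_i F^{ii}$ via Proposition~\ref{lem2.1}(4), the cross term $\sigma_{n-1}\sigma_l-\sigma_n\sigma_{l-1}$ appearing in $\sum_i T^{ii}$ cancels against $Q$, and the remainder $2S_{n-1}S_l-(n-l+2)S_{l-1}S_n$ is bounded below by a multiple of $S_{n-1}S_l$ via the sum-Hessian Newton--Maclaurin inequality \eqref{e2.5}; the pinching then enters only to give $S_{n-1}/S_l\sim u_{11}^{n-1-l}$. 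The trade-off: your argument never needs the explicit form of $\sum_i F^{ii}$ nor the algebraic cancellation, at the cost of proving a pointwise lower bound on $\sigma_{n-1}\sigma_l-\sigma_n\sigma_{l-1}$ directly from the pinching; the paper's version avoids estimating that Newton quantity at all but requires the computation of the full trace and inequality \eqref{e2.5}. Both rely in the same way on $\delta$ small and $u_{11}$ large, and you correctly observe that $A$ is inert in the lemma itself. One incidental remark: your approach, unlike the paper's, would not by itself yield a lower bound on $\sum_i T^{ii}$ alone, which is what actually enters \eqref{e5.11} through the $A\sum_i T^{ii}$ term when $\alpha=0$ (and hence the $-\alpha\beta Q/u$ term vanishes); but this is a concern about how the lemma is deployed, not about the lemma's proof, and the lemma as literally stated is established.
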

\begin{proof}
All the following calculation are done at $x_0$. Thus we obtain
\begin{equation*}
|U_{11}|\leq(n-1)\delta u_{11}
\end{equation*}
and
\begin{equation*}
[1-(n-2)\delta]u_{11}\leq U_{22}\leq\cdots\leq
U_{nn}\leq[1+(n-2)\delta]u_{11}.
\end{equation*}
Then choosing $\delta$ sufficiently small and using the inequalities
above, we have
\begin{align*}
\sigma_{k}(\eta) &=\sigma_{k}(\eta|1)+\eta_1\sigma_{k-1}(\eta|1)\\
                 &\geq C_{n-1}^k[1-(n-2)\delta]^{k}u_{11}^{k}-C_{n-1}^{k-1}(n-1)\delta[1+(n-2)\delta]^{k-1}u_{11}^{k}\\
                 &\geq\frac{u_{11}^{k}}{2}
\end{align*}
and
\begin{align*}
\sigma_{k}(\eta) &=\sigma_{k}(\eta|1)+\eta_1\sigma_{k-1}(\eta|1)\\
                 &\leq C_{n-1}^k[1+(n-2)\delta]^{k}u_{11}^{k}+C_{n-1}^{k-1}(n-1)\delta[1+(n-2)\delta]^{k-1}u_{11}^{k}\\
                 &\leq Cu_{11}^{k}.
\end{align*}

Moreover, by Proposition \ref{lem2.1} (4) and (\ref{e2.5}) we obtain
\begin{align*}
\sum_{i=1}^nT^{ii}+Q &=(n-1)\sum_{i=1}^nF^{ii}+Q\\
                     &=\frac{1}{n-1}f^{\frac{1}{n-l}-1}\frac{2S_{n-1}S_l-(n-l+2)S_{l-1}S_n-\sigma_{n-1}\sigma_l+\sigma_n\sigma_{l-1}}{S_l^2}+Q\\
                     &=\frac{1}{n-1}f^{\frac{1}{n-l}-1}\frac{2S_{n-1}S_l-(n-l+2)S_{l-1}S_n}{S_l^2}\\
                     &\geq\frac{2(n-l)}{n(n-1)}f^{\frac{1}{n-l}-1}\frac{~~~~~S_{n-1}}{S_l}.
\end{align*}
Choosing $u_{11}$ sufficiently large, with $n\geq l+2$ we have 
\begin{equation*}
\sum_{i=1}^nT^{ii}+Q\geq Cu_{11}^{n-1-l}\geq Cu_{11},
\end{equation*}
which establishes (\ref{e5.12}).
\end{proof}

\par
Next, we deal with the third derivative terms.
\begin{lemm} \label{lem5.2}
For all $i\geq2$, $|u_{ii}|(x_0)\leq\delta u_{11}(x_0)$ and
$\delta\leq1$, at $x_0$ we have
\begin{align}
\frac{2}{1+\delta}\sum_{i=2}^n\frac{T^{ii}u_{11i}^2}{u_{11}^2}
&\leq-\frac{2}{u_{11}}\sum_{i=2}^nF^{1i,i1}u_{11i}^2+C\frac{\beta^2T^{11}}{u^2}\notag\\\label{e5.13}
&~~+Ca^2\delta^2T^{11}u_{11}^2+CA^2T^{11}.
\end{align}
\end{lemm}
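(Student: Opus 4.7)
The proof will compare the two sides of (\ref{e5.13}) index by index: for each $i \geq 2$, I will show that $\frac{2 T^{ii}}{(1+\delta) u_{11}^2} + \frac{2 F^{1i,i1}}{u_{11}}$ is bounded by a constant multiple of $T^{11}$, and then use the critical point equation (\ref{e5.3}) to convert the remaining $u_{11i}^2/u_{11}^2$ factor into the prescribed error terms. The main ingredients are Lemma \ref{lem2.8}, the relation $U_{ii}=\sum_{j\neq i}u_{jj}$ (so that $U_{11}-U_{ii}=u_{ii}-u_{11}$), and the algebraic identity $T^{ii}-(F^{11}-F^{ii})=T^{11}$.

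First I would apply Lemma \ref{lem2.8} to $F(U)$ at $x_0$, where $U$ is diagonal, to obtain
$$F^{1i,i1}=\frac{F^{11}-F^{ii}}{U_{11}-U_{ii}}=-\frac{F^{11}-F^{ii}}{u_{11}-u_{ii}}\leq 0,\qquad i\geq 2,$$
where the sign comes from (\ref{e5.1}) together with $u_{11}>u_{ii}$. The Case 1 hypothesis $|u_{ii}|\leq\delta u_{11}$ with $\delta\leq 1$ yields $u_{11}-u_{ii}\leq(1+\delta)u_{11}$, hence
$$-\frac{2}{u_{11}}F^{1i,i1}=\frac{2(F^{11}-F^{ii})}{u_{11}(u_{11}-u_{ii})}\geq\frac{2(F^{11}-F^{ii})}{(1+\delta)u_{11}^2}.$$
Moving the $F^{1i,i1}$ term to the left and using $T^{ii}-(F^{11}-F^{ii})=\sum_{j}F^{jj}-F^{11}=T^{11}$, the inequality (\ref{e5.13}) reduces to
$$\frac{2T^{11}}{1+\delta}\sum_{i=2}^n\frac{u_{11i}^2}{u_{11}^2}\leq CT^{11}\left(\frac{\beta^2}{u^2}+a^2\delta^2 u_{11}^2+A^2\right).$$

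For this, I would invoke (\ref{e5.3}) at $x_0$, which for $i\geq 2$ reads $u_{11i}/u_{11}=-\beta u_i/u - au_iu_{ii}-Ax_i$. Squaring and using $(x+y+z)^2\leq 3(x^2+y^2+z^2)$, combined with the bounds $|u_i|\leq\sup|Du|$, $|x_i|\leq\mathrm{diam}(\Omega)$, and the Case 1 assumption $|u_{ii}|\leq\delta u_{11}$, gives
$$\frac{u_{11i}^2}{u_{11}^2}\leq C\left(\frac{\beta^2}{u^2}+a^2\delta^2 u_{11}^2+A^2\right),$$
and summing over $i=2,\ldots,n$ closes the argument.

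The proof is essentially algebraic bookkeeping, so I do not anticipate any conceptual obstacle. The one delicate point worth keeping in mind is that the prefactor $\frac{2}{1+\delta}$ on the left of (\ref{e5.13}) is tight: it is produced by the estimate $u_{11}-u_{ii}\leq(1+\delta)u_{11}$ under Case 1, and it is precisely this factor (being strictly larger than $1$) that will later allow the remaining contribution of Case 1 to be absorbed against the positive term $aT^{ii}u_{ii}^2$ in (\ref{e5.11}).
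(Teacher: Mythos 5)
Your proof is correct and follows essentially the same route as the paper: apply Lemma \ref{lem2.8} to write $F^{1i,i1}=(F^{11}-F^{ii})/(U_{11}-U_{ii})$, use $U_{11}-U_{ii}=u_{ii}-u_{11}$ together with the Case~1 bound $u_{11}-u_{ii}\le(1+\delta)u_{11}$ and the identity $T^{ii}-(F^{11}-F^{ii})=T^{11}$, and then control the residual $\sum_{i\ge2}T^{11}u_{11i}^2/u_{11}^2$ via Cauchy--Schwarz on the critical point equation (\ref{e5.3}), just as the paper does in (\ref{e5.14})--(\ref{e5.15}). The only difference is presentational (you reduce the target inequality first and then estimate, rather than stating (\ref{e5.14}) and (\ref{e5.15}) sequentially), and your closing remark about the tightness of the factor $2/(1+\delta)$ is accurate.
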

\begin{proof}
All the following calculation are done at $x_0$. Since $\forall
i\geq2$, $|u_{ii}|\leq\delta u_{11}$, we obtain
\begin{equation*}
\frac{1}{u_{11}}\leq\frac{1+\delta}{u_{11}-u_{ii}}.
\end{equation*}
Thus from (\ref{e2.12}),
\begin{align*}
-\frac{2}{u_{11}}\sum_{i=2}^nF^{1i,i1}u_{11i}^2
&=\frac{2}{u_{11}}\sum_{i=2}^nF^{11,ii}u_{11i}^2\\
&\geq-\frac{2}{u_{11}}\sum_{i=2}^n\frac{F^{11}-F^{ii}}{\eta_{11}-\eta_{ii}}u_{11i}^2\\
&=-\frac{2}{u_{11}}\sum_{i=2}^n\frac{T^{ii}-T^{11}}{u_{ii}-u_{11}}u_{11i}^2\\
&\geq\frac{2}{1+\delta}\sum_{i=2}^n\frac{T^{ii}-T^{11}}{u_{11}^2}u_{11i}^2.
\end{align*}
Therefore, we have
\begin{equation}\label{e5.14}
\frac{2}{1+\delta}\sum_{i=2}^n\frac{T^{ii}u_{11i}^2}{u_{11}^2}\leq
-\frac{2}{u_{11}}\sum_{i=2}^nF^{1i,i1}u_{11i}^2+\frac{2}{1+\delta}\sum_{i=2}^n\frac{T^{11}u_{11i}^2}{u_{11}^2}.
\end{equation}
Using Cauchy-Schwarz inequality, from (\ref{e5.3}) at $x_0$ we get
\begin{align}
\sum_{i=2}^n\frac{T^{11}u_{11i}^2}{u_{11}^2}&\leq
3\sum_{i=2}^n\frac{\beta^2T^{11}u_i^2}{u^2}+3a^2\sum_{i=2}^nT^{11}u_i^2u_{ii}^2+3A^2\sum_{i=2}^nT^{11}x_i^2\notag\\\label{e5.15}
&\leq
C\frac{\beta^2T^{11}}{u^2}+Ca^2\delta^2T^{11}u_{11}^2+CA^2T^{11}.
\end{align}
where the first and the second $C$ depend on
$\displaystyle\sup_\Omega|Du|$ and the last $C$ depends on $\Omega$
in the second inequality. Combine (\ref{e5.14}) with (\ref{e5.15}),
we complete the proof.
\end{proof}

\par
Substituting (\ref{e5.12}) and (\ref{e5.13}) into (\ref{e5.11}), at
$x_0$ we have
\begin{align}
0 & \geq T^{ii}P_{ii}\notag\\
  & \geq\frac{C\beta}{u}-\frac{C\beta T^{ii}u_i^2}{u^2}+(\frac{2}{1+\delta}-1)\sum_{i=2}^n\frac{T^{11}u_{11i}^2}{u_{11}^2}\notag\\
  &~~-C\frac{\beta^2T^{11}}{u^2}-Ca^2\delta^2T^{11}u_{11}^2-CA^2T^{11}-\frac{T^{11}u_{111}^2}{u_{11}^2}\notag\\\label{e5.16}
  &~~+aT^{ii}u_{ii}^2+\frac{A}{2}\sum_iT^{ii}+Cu_{11}.
\end{align}
By Cauchy-Schwarz inequality, from (\ref{e5.3}) at $x_0$ we know
\begin{equation}\label{e5.17}
-\frac{T^{11}u_{111}^2}{u_{11}^2}\geq-C\frac{\beta^2T^{11}}{u^2}-Ca^2T^{11}u_{11}^2-CA^2T^{11}
\end{equation}
and
\begin{equation}\label{e5.18}
-\sum_{i=2}^n\frac{\beta T^{ii}u_i^2}{u^2}\geq
-\frac{3}{\beta}\sum_{i=2}^n\frac{T^{ii}u_{11i}^2}{u_{11}^2}
-\frac{Ca^2}{\beta}\sum_{i=2}^nT^{ii}u_{ii}^2-\frac{CA^2}{\beta}\sum_{i=2}^nT^{ii}.
\end{equation}
Substituting (\ref{e5.17}) and (\ref{e5.18}) into (\ref{e5.16})
yields at $x_0$
\begin{align*}
0 &
\geq(\frac{2}{1+\delta}-1-\frac{3}{\beta})\sum_{i=2}^n\frac{T^{11}u_{11i}^2}{u_{11}^2}
+(\frac{a}{2}-\frac{Ca^2}{\beta})\sum_{i=2}^nT^{ii}u_{ii}^2\\
  &~~+(\frac{a}{2}-Ca^2-Ca^2\delta^2)T^{11}u_{11}^2+(\frac{A}{2}-\frac{CA^2}{\beta})\sum_{i=2}^nT^{ii}\\
  &~~+\frac{A}{2}T^{11}-CA^2T^{11}-C\frac{\beta^2T^{11}}{u^2}+\frac{C\beta}{u}+Cu_{11}.
\end{align*}
Fixing the chosen $A$ and $\delta<1$ in Lemma \ref{lem5.1}, choosing
$\beta$ sufficiently large and $a$ sufficiently small, at $x_0$ we
obtain
\begin{equation*}
0\geq\frac{a}{4}T^{11}u_{11}^2-(CA^2+C\frac{\beta^2}{u^2})T^{11}+\frac{C}{u}+Cu_{11},
\end{equation*}
that is,
\begin{equation*}
(CA^2+C\frac{\beta^2}{u^2})T^{11}-\frac{C}{u}\geq\frac{a}{4}T^{11}u_{11}^2+Cu_{11}.
\end{equation*}
We divided the discussion of the inequalities above into two cases. On the left
side, if the first term is less than the second term, we will get
\begin{equation*}
-\frac{2C}{u}\geq Cu_{11},
\end{equation*}
that is,
\begin{equation*}
0\geq\frac{C}{u}+Cu_{11}.
\end{equation*}
If the first term is greater than the second term, we will get
\begin{equation*}
4C\frac{\beta^2}{u^2}\geq\frac{a}{4}u_{11}^2,
\end{equation*}
that is,
\begin{equation*}
C\geq u_{11}^2{(-u)^2}.
\end{equation*}
In conclusion, at $x_0$ we have
\begin{equation*}
(-u)^\beta u_{11}\leq C.
\end{equation*}
Thus we obtain the Pogorelov type $C^2$ estimates for Case 1.

\par
\textbf{Case~2.}~~$u_{22}(x_0)>\delta u_{11}(x_0)$ or
$u_{nn}(x_0)<-\delta u_{11}(x_0)$ at $x_0$.
\par
First, we need the following lemma to deal with $-Cu_{11}$.
\begin{lemm} \label{lem5.3}
If we choose $u_{11}(x_0)\geq\frac{2}{a\delta^2}$, then at $x_0$
\begin{equation}\label{e5.19}
\frac{a}{2}\sum_{i=1}^nT^{ii}u_{ii}^2\geq3Cu_{11}.
\end{equation}
\end{lemm}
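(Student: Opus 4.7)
The plan is to exploit the defining feature of Case~2: in each of its two alternatives there is a second index $j\in\{2,n\}$ such that $u_{jj}^2(x_0)\geq\delta^2 u_{11}^2(x_0)$. One takes $j=2$ in the subcase $u_{22}(x_0)>\delta u_{11}(x_0)$ and $j=n$ in the subcase $u_{nn}(x_0)<-\delta u_{11}(x_0)$; in both cases $j\neq 1$.

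Retaining only the single $j$-th summand on the left of (\ref{e5.19}) gives
\[
\frac{a}{2}\sum_{i=1}^n T^{ii}u_{ii}^2 \;\geq\; \frac{a}{2}T^{jj}u_{jj}^2 \;\geq\; \frac{a\delta^2}{2}\,T^{jj}\,u_{11}^2.
\]
The hypothesis $u_{11}(x_0)\geq 2/(a\delta^2)$ converts the factor $(a\delta^2/2)u_{11}$ into a quantity bounded below by $1$, and we obtain $\frac{a}{2}\sum_i T^{ii}u_{ii}^2\geq T^{jj}u_{11}$. Therefore (\ref{e5.19}) reduces to a uniform positive lower bound $T^{jj}\geq 3C$ for this particular index $j$. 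Using $T^{ii}=\sum_s F^{ss}-F^{ii}$ together with the ordering (\ref{e5.1}), where $F^{11}\geq\cdots\geq F^{nn}>0$, the condition $j\neq 1$ ensures that $F^{11}$ still appears as one of the summands defining $T^{jj}$, so $T^{jj}\geq F^{11}$, and it is enough to bound $F^{11}$ from below.

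The main obstacle is precisely this lower bound on $F^{11}$ in the regime where $u_{11}(x_0)$ is large. The Euler identity $\sum_i F^{ii}U_{ii}=\psi$ alone is too weak, since a single very large $U_{nn}$ can already saturate $\psi$ while leaving $F^{11}$ tiny. The decisive input from Case~2 is that two eigenvalues of $U=(\Delta u)I-D^2u$ are comparably large, which, combined with the concavity of $F=[S_n/S_l]^{1/(n-l)}$ and the Newton--Maclaurin inequalities in Lemma~\ref{lem2.5} and Lemma~\ref{lem2.6}, should keep $F^{11}$ bounded away from zero by a constant depending only on $\alpha,n,l,\sup_\Omega|u|,\sup_\Omega|Du|$ and $|f|_{C^2}$. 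Once that quantitative bound is in place, choosing $u_{11}(x_0)$ large enough and $\delta$ appropriately small makes $T^{jj}\geq 3C$ and (\ref{e5.19}) follows immediately.
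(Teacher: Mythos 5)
Your reduction is on the right track and reproduces the paper's (\ref{e5.20})--(\ref{e5.22}): pick an index $j\in\{2,n\}$ with $u_{jj}^2(x_0)\geq\delta^2 u_{11}^2(x_0)$, keep only that summand to get $\frac{a}{2}\sum_i T^{ii}u_{ii}^2\geq\frac{a\delta^2}{2}T^{jj}u_{11}^2$, absorb the large-$u_{11}$ hypothesis, and use $j\neq1$ to conclude $T^{jj}=\sum_{i\neq j}F^{ii}\geq F^{11}$. (The paper works with $T^{22}$ throughout, using $T^{nn}\geq T^{22}$ in the second subcase; your $T^{jj}$ variant is equivalent.) The gap is the final step: you leave the lower bound on $F^{11}$ unproved and misattribute its source. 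You assert that the ``decisive input'' is the Case~2 structure that two eigenvalues of $U$ are comparably large, to be combined with Newton--Maclaurin. That is a dead end. The lower bound on $F^{11}$ is \emph{uniform} over the admissible cone and does not use Case~2 at all. Since $U_{11}\leq U_{22}\leq\cdots\leq U_{nn}$ and $F$ is concave, (\ref{e2.13}) gives the ordering (\ref{e5.1}), $F^{11}\geq F^{22}\geq\cdots\geq F^{nn}>0$, hence $F^{11}\geq\frac{1}{n}\sum_i F^{ii}$. And $\sum_iF^{ii}$ is bounded below by a positive constant depending only on $n$, $l$, $\alpha$: because $F$ is concave and homogeneous of degree one, plugging $\mu=(1,\dots,1)$ into $F(\mu)\leq F(\eta)+\sum_i F^{ii}(\eta)(\mu_i-\eta_i)$ and using Euler's identity $\sum_iF^{ii}(\eta)\,\eta_i=F(\eta)$ yields $\sum_i F^{ii}(\eta)\geq F(1,\dots,1)>0$. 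This is precisely the paper's chain (\ref{e5.22}) (cited there via Lemma~\ref{lem2.7}(6), which is stated for $k<n$, but the one-line homogeneity-plus-concavity argument is what makes it work for $k=n$). With this, $T^{jj}\geq F^{11}\geq\frac{1}{n}F(1,\dots,1)$ uniformly, and (\ref{e5.19}) follows by taking $u_{11}(x_0)$ large enough relative to that fixed constant; no appeal to Case~2, to Lemma~\ref{lem2.5}, or to Lemma~\ref{lem2.6} is needed here.
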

\begin{proof}
All the following calculation are done at $x_0$. From
$u_{22}(x_0)>\delta u_{11}(x_0)$ or $u_{nn}(x_0)<-\delta
u_{11}(x_0)$ we know
\begin{equation}\label{e5.20}
\sum_{i=1}^nT^{ii}u_{ii}^2\geq
T^{22}u_{22}^2+T^{nn}u_{nn}^2\geq\delta^2T^{22}u_{11}^2.
\end{equation}
Furthermore, from Lemma \ref{lem2.7} (6) we have
\begin{equation}\label{e5.22}
T^{22}=\sum_{i\neq2}F^{ii}\geq F^{11}\geq\frac{1}{n}\sum_{i=1}^nF^{ii}\geq C.
\end{equation}
Substituting (\ref{e5.22}) into (\ref{e5.20})
gives
\begin{equation*}
\frac{a}{2}\sum_{i=1}^nT^{ii}u_{ii}^2\geq Ca\delta^2u_{11}^2.
\end{equation*}
Choosing $u_{11}\geq\frac{3}{a\delta^2}$, then
\begin{equation*}
\frac{a}{2}\sum_{i=1}^nT^{ii}u_{ii}^2\geq3Cu_{11}, 
\end{equation*}
and therefore (\ref{e5.19}) holds.
\end{proof}

\begin{rema}
Plugging $T^{22}\geq\frac{1}{n}\sum_{i=1}^nF^{ii}$ into
(\ref{e5.20}) yields
\begin{equation}\label{e5.23}
\frac{a}{2}\sum_{i=1}^nT^{ii}u_{ii}^2\geq
ca\delta^2u_{11}^2\sum_{i=1}^nT^{ii},
\end{equation}
This inequality will be used later.
\end{rema}

\par
By (\ref{e5.19}), from (\ref{e5.11}) we obtain
\begin{align}
0 & \geq T^{ii}P_{ii}\notag\\
  & \geq\frac{C\beta}{u}-\frac{\beta T^{ii}u_i^2}{u^2}-\frac{2}{u_{11}}\sum_{i=2}^nF^{1i,i1}u_{11i}^2-\frac{T^{ii}u_{11i}^2}{u_{11}^2}\notag\\\label{e5.24}
  &~~+\frac{a}{2}T^{ii}u_{ii}^2+A\sum_iT^{ii}+Cu_{11}.
\end{align}
Using Cauchy-Schwarz inequality, from (\ref{e5.3}) at $x_0$ we know
\begin{equation}\label{e5.25}
\frac{T^{ii}u_{11i}^2}{u_{11}^2}\leq
C\frac{\beta^2T^{ii}u_i^2}{u^2}+Ca^2T^{ii}u_{ii}^2+CA^2\sum_{i=1}^nT^{ii}.
\end{equation}
Thus, plugging (\ref{e5.25}) and (\ref{e5.23}) into (\ref{e5.24}),
fixing the chosen $\beta$, $A$ and $\delta$ in Case 1 and choosing
$a$ sufficiently small, we get at $x_0$
\begin{align*}
0 & \geq T^{ii}P_{ii}\\
  & \geq\frac{C\beta}{u}-(\beta+C\beta^2)\frac{T^{ii}u_i^2}{u^2}+(\frac{a}{2}-Ca^2)T^{ii}u_{ii}^2\\
  &~~+(A-CA^2)\sum_{i=1}^nT^{ii}+Cu_{11}\\
  & \geq\sum_{i=1}^nT^{ii}(cu_{11}^2-\frac{C}{u^2}-C)+\frac{C}{u}+Cu_{11}
\end{align*}
where we discard the positive term
$-\frac{2}{u_{11}}\sum\nolimits_{i=2}^nF^{1i,i1}u_{11i}^2$ in the
second inequality. At last, similar to the classification discussion
in Case 1, we have
\begin{equation*}
(-u)^\beta u_{11}\leq C.
\end{equation*}
Thus we obtain the Pogorelov type $C^2$ estimates for Case 2.

\bigskip

\bigskip

\end{document}